\documentclass[11pt]{amsart}
\usepackage{amssymb}
\usepackage{graphicx}
\usepackage{xcolor} 
\usepackage{tensor}
\usepackage{fullpage} 
\usepackage{amsmath}
\usepackage{amsthm}
\usepackage{verbatim}
\usepackage{hyperref}
\usepackage{array} 
\usepackage{enumitem}
\setlist[enumerate]{leftmargin=1.2em}
\setlist[itemize]{leftmargin=1.2em}
\usepackage{seqsplit,mathtools}
\mathtoolsset{showonlyrefs}

\definecolor{green}{rgb}{0,0.5,0} 

\newcommand{\Red}[1]{\begingroup\color{red} #1\endgroup} 
\newcommand{\Blue}[1]{\begingroup\color{blue} #1\endgroup} 
\newcommand{\Green}[1]{\begingroup\color{green} #1\endgroup}

\newtheorem{theorem}{Theorem}[section]
\newtheorem{thm}{Theorem}[section]

\newtheorem{lem}[theorem]{Lemma}

\newtheorem{prop}[theorem]{Proposition}

\theoremstyle{definition}

\theoremstyle{remark}
\newtheorem{remark}[theorem]{Remark}

\numberwithin{equation}{section}

\numberwithin{equation}{section}
\newcommand{\nrm}[1]{\Vert#1\Vert}

\newcommand{\br}[1]{\overline{#1}}

\newcommand{\nnrm}[1]{{\vert\kern-0.25ex\vert\kern-0.25ex\vert #1 
		\vert\kern-0.25ex\vert\kern-0.25ex\vert}}

\newcommand{\lap}{\Delta}

\newcommand{\rd}{\partial}
\newcommand{\nb}{\nabla}

\newcommand{\ift}{\infty}

\newcommand{\alp}{\alpha}
\newcommand{\bt}{\beta}
\newcommand{\gmm}{\gamma}

\newcommand{\veps}{\varepsilon}

\newcommand{\tht}{\theta}

\newcommand{\omg}{\omega}

\newcommand{\bfe}{{\bf e}}

\newcommand{\bfu}{{\bf u}}
\newcommand{\bfv}{{\bf v}}

\newcommand{\bfx}{{\bf x}}

\newcommand{\bbR}{\mathbb R}

\newcommand{\calA}{\mathcal A}

\newcommand{\ackn}[1]{
	\addtocontents{toc}{\protect\setcounter{tocdepth}{1}}
	\subsection*{Acknowledgements} {#1}
	\addtocontents{toc}{\protect\setcounter{tocdepth}{1}} }

\begin{document}
	
	\bibliographystyle{plain}
	\title{On the upper bound for the vorticity growth of bi-rotational Euler flows without swirl}

	\renewcommand{\thefootnote}{\fnsymbol{footnote}}
	\footnotetext{\emph{2020 AMS Mathematics Subject Classification:} 76B47, 35Q35}
	\footnotetext{\emph{Key words: Vortex stretching; Bi-rotational symmetry; Vorticity; Biot--Savart law} }
	\renewcommand{\thefootnote}{\arabic{footnote}}
	
	\author{Khakim Egamberganov}
	\address{Department of Mathematics, National University of Singapore, Block S17, 10 Lower Kent Ridge Road, Singapore, 119076, Singapore}
	\email{k.egamberganov@u.nus.edu}
	
	
	\author{Deokwoo Lim}
	\address{School of Mathematics, Korea Institute for Advanced Study, 85 Hoegi-ro, Dongdaemun-gu, Seoul, 02455, Republic of Korea.}
	\email{dwlim95@kias.re.kr}
	
	
	\date\today
	\maketitle
	
\begin{abstract}
		For $d\geq 4$, we consider incompressible Euler flows in $\bbR^{d}$ with bi-rotational symmetry and without swirl. Our first result gives the local wellposedness of the Yudovich-type solution. The second result provides global wellposedness up to $d\leq 6$. In particular, it shows that the rate of growth of the vorticity maximum coincides with the rate from axisymmetric flows without swirl, which was obtained in the paper by the second author and Jeong (Arch. Ration. Mech. Anal. 249(3):32, 2025) and Shao--Wei--Zhang (Acta Math. Sin. (Engl. Ser.), 42(3):663-679, 2026).
	\end{abstract}

	\medskip

	\section{Introduction}\label{sec:intro}
	
	The incompressible Euler equations in $\bbR^d$ are given by
	\begin{equation}\label{eq:Eulereq}
		\left\{
		\begin{aligned}
			\rd_{t} \bfv +  \bfv \cdot\nb \bfv +\nb p&=0,\\
			\nb\cdot \bfv &=0, 
		\end{aligned}
		\right.
	\end{equation}
	where $ \bfv = (v^{1},\cdots,v^{d}) : [0,T)\times\bbR^{d}\to\bbR^{d} $ and $ p : [0,T)\times\bbR^{d}\to\bbR $. The coordinates that we are going to consider is the \textit{bi-polar coordinates,} which is given as
	\begin{equation*}
		\begin{aligned}
			r^{2} = \sum_{i=1}^{n+1}x_{i}^{2}, \qquad s^{2} = \sum_{i=n+2}^{n+m+2}x_{i}^{2}. 
		\end{aligned}
	\end{equation*}
	Under this bi-polar coordinates, we say that a solution $ \bfv $ of \eqref{eq:Eulereq} has \textit{bi-rotational symmetry without swirl} if it has the form
	\begin{equation}\label{eq:birothighd}
		\bfv(t,\bfx) = u^{r}(t,r,s)\bfe_{r} + u^{s}(t,r,s)\bfe_{s}, \qquad \bfx = (x_{1},...,x_{d}), 
	\end{equation} where $u^{r}, u^{s}$ are scalar-valued. 
	
	Recall that in $\bbR^{d}$, the vorticity $ \boldsymbol{\omg} $ is a two-tensor of the form
	\begin{equation}\label{eq:vorttensor}
		\boldsymbol{\omg} = (\omg^{i,j})_{1\leq i,j\leq d},\quad \omg^{i,j}=\rd_{j}v^{i}-\rd_{i}v^{j}.
	\end{equation} 
	Similarly, the stream function $ (\Psi^{i,j})_{1\leq i,j\leq d} $ is defined as
	\begin{equation}\label{eq:streamftntensor}
		\Psi^{i,j}=\lap^{-1}\omg^{i,j}
	\end{equation}
	Under the bi-rotational symmetry and no-swirl assumption \eqref{eq:birothighd}, the vorticity and the stream function have the expression
	\begin{equation}\label{eq:vorttensca}
		\omg^{i,j}(\bfx)=-\frac{x_{i}x_{j}}{rs}w(r,s),\qquad w=\rd_{r}u^{s}-\rd_{s}u^{r}, \qquad \Psi^{i,j}(\bfx)=-\frac{x_{i}x_{j}}{rs}\psi(r,s),
	\end{equation}
	for some scalar functions $w$ and $\psi$, and for any $1\leq i\leq n+1$, $n+2\leq j\leq n+m+2$, and $ \bfx\in \bbR^{d} $. From now on, we refer to $w$ and $\psi$ as the scalar vorticity and the scalar stream function, respectively. Then, the scalar vorticity $w$ satisfies the following evolution equation:
	\begin{equation}\label{eq:vorteq}
		\rd_{t}w+(u^{r}\rd_{r}+u^{s}\rd_{s})w=\bigg(n\frac{u^{r}}{r}+m\frac{u^{s}}{s}\bigg)w,
	\end{equation}
	Alternatively, this can be written as the transport equation of the quantity $w/(r^{n}s^{m})$ in $\bbR^{d}$:
	\begin{equation}\label{eq:vorticityeq}
		\rd_{t}\left(\frac{w}{r^{n}s^{m}}\right)+(u^{r}\rd_{r}+u^{s}\rd_{s})\left(\frac{w}{r^{n}s^{m}}\right)=0,
	\end{equation}
	Also, from the divergence-free condition
	\begin{equation}\label{eq:divergencefree}
		\rd_{r}u^{r}+\frac{n}{r}u^{r}+\rd_{s}u^{s}+\frac{m}{s}u^{s}=0
	\end{equation}
	and the relation between $w$ and the scalar stream function $\psi$
	\begin{equation}\label{eq:psi-w-relation}
		\left\{
		\begin{aligned}
			\bigg(\rd_{r}^{2}+\frac{n}{r}\rd_{r}-\frac{n}{r^{2}}+\rd_{s}^{2}+\frac{m}{s}\rd_{s}-\frac{m}{s^{2}}\bigg) \psi = w \quad &\mbox{in} \quad \Pi , \\
			\psi = 0 \quad &\mbox{on} \quad \partial\Pi = \left\{ (r,s) \, : \, r = 0 \mbox{ or } s = 0 \right\}, 
		\end{aligned}
		\right.
	\end{equation}
	we can get the bi-rotational velocity $\bfu:=(u^{r},u^{s})$ from $\psi$ as
	\begin{equation}\label{eq:psicurl}
		u^{r}=-\frac{\rd_{s}(s^{m}\psi)}{s^{m}}=-\bigg(\frac{m}{s}\psi+\rd_{s}\psi\bigg),\quad u^{s}=\frac{\rd_{r}(r^{n}\psi)}{r^{n}}=\frac{n}{r}\psi+\rd_{r}\psi.
	\end{equation}
	In particular, $\psi$ can be recovered from the relation \eqref{eq:psi-w-relation} as
	\begin{equation}\label{eq:psiinvlap}
		\psi
		=rs\lap_{\bbR^{n+3}\times\bbR^{m+3}}^{-1}\bigg[\frac{w}{rs}\bigg],
	\end{equation}
	where $\lap_{\bbR^{n+3}\times\bbR^{m+3}}$ is the Laplacian in $\bbR^{d+4}=\bbR^{n+3}\times\bbR^{m+3}$.
	
	\subsection{Main results}
	
	The first result is about the local wellposedness of the Yudovich-type solutions.
	\begin{thm}\label{thm:LWP}
		Let $w_{0}\in (L^{d,1}\cap L^{\ift})(\bbR^{d})$ satisfy
		\begin{equation}
			(1+r^{n-1}s^{m}+r^{n}s^{m-1})\frac{w_{0}}{r^{n}s^{m}}\in L^{d,1}(\bbR^{d}).
		\end{equation}
		Then there exists $T>0$ and a corresponding unique solution $w\in L^{\ift}(0,T;(L^{d,1}\cap L^{\ift})(\bbR^{d}))$ of \eqref{eq:Eulereq} that satisfies
		\begin{equation}
			(1+r^{n-1}s^{m}+r^{n}s^{m-1})\frac{w}{r^{n}s^{m}}\in L^{\ift}(0,T;L^{d,1}(\bbR^{d})).
		\end{equation}
	\end{thm}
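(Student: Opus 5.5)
We prove Theorem~\ref{thm:LWP} by an a priori estimate plus a compactness/uniqueness argument, following the Yudovich scheme adapted to the transported quantity $\xi:=w/(r^{n}s^{m})$ from \eqref{eq:vorticityeq}. We regard $w$ and $\xi$ as functions on $\bbR^{d}$ through $(r,s)$. Since $\xi$ is transported by the divergence-free field $\bfv$ of \eqref{eq:birothighd}, every rearrangement-invariant norm of $\xi$ in $\bbR^{d}$ is conserved, provided the flow is Lipschitz (or log-Lipschitz) so that it is measure preserving. The controlled quantities are therefore
\[
A(t)=\nrm{w(t)}_{L^{d,1}\cap L^{\ift}},\qquad B(t)=\Big\|(1+r^{n-1}s^{m}+r^{n}s^{m-1})\,\xi(t)\Big\|_{L^{d,1}}.
\]
The term $\nrm{\xi}_{L^{d,1}}$ is conserved. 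The weighted parts $r^{n-1}s^{m}\xi=w/r$ and $r^{n}s^{m-1}\xi=w/s$ are not conserved; their weights change along trajectories at a rate controlled by $u^{r}/r$ and $u^{s}/s$.

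\textbf{Step 1: velocity bounds.} Using \eqref{eq:psiinvlap} and \eqref{eq:psicurl}, I will show
\[
\Big\|\frac{u^{r}}{r}\Big\|_{L^\ift}+\Big\|\frac{u^{s}}{s}\Big\|_{L^\ift}+\nrm{\bfv}_{L^\ift}\lesssim \nrm{w}_{L^{d,1}\cap L^\ift}+\Big\|\frac{w}{r}\Big\|_{L^{d,1}}+\Big\|\frac{w}{s}\Big\|_{L^{d,1}}.
\]
The guiding heuristic is that $\nb\bfv$ is a Calderón--Zygmund transform of $\boldsymbol\omg$, whose size is $|w|$. Near the axes $\{r=0\}$ and $\{s=0\}$, however, $u^{r}/r$ behaves like $\rd_{r}u^{r}$ only if we also control the singular part of the kernel. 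I will use the lifted representation $\psi = rs\,\lap^{-1}_{\bbR^{d+4}}[w/(rs)]$. This gives $u^{r}/r\sim \rd_s\psi/r + m\psi/(rs)$, which reduces to the bound $|\nb^{k}\lap^{-1}_{\bbR^{d+4}} f|\lesssim$ Riesz-potential estimates applied to $f=w/(rs)$. I will then convert the $\bbR^{d+4}$ Lorentz norms back to $\bbR^{d}$ weighted norms; this conversion is exactly why the weights $1/r$ and $1/s$ appear. The Lorentz space $L^{d,1}$ is the endpoint at which $I_{1}:L^{d,1}\to L^\ift$, so $\nrm{\bfv}_{L^\ift}\lesssim\nrm{w}_{L^{d,1}}$. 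The log-Lipschitz bound for $\bfv$ then follows from $w\in L^{d,1}\cap L^\ift$ in the usual Yudovich way.

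\textbf{Step 2: a priori estimate.} From \eqref{eq:vorteq}, along trajectories we have
\[
\frac{d}{dt}w=\Big(n\frac{u^r}{r}+m\frac{u^s}{s}\Big)w,\qquad \frac{d}{dt}\frac{w}{r}=\Big((n-1)\frac{u^r}{r}+m\frac{u^s}{s}\Big)\frac{w}{r},
\]
and similarly for $w/s$. Composing with a measure-preserving flow preserves the $L^{d,1}$ and $L^\ift$ norms, so Gronwall gives $A+B\le (A_0+B_0)\exp\big(C\int_0^t (A+B)\big)$. This closes for a short time $T\sim 1/(A_0+B_0)$.

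\textbf{Step 3: existence and uniqueness.} For existence, I mollify the initial data while preserving the symmetry, solve the smooth problem, and use the uniform bounds from Step~2. Compactness of $\xi$ in $L^{p}_{loc}$ and convergence of the velocities, which follows from the log-Lipschitz modulus, then let me pass to the limit in the weak form of \eqref{eq:vorticityeq}. For uniqueness, I run Yudovich's energy argument on the difference of two velocities in $L^{2}(\bbR^{d})$. This needs $\nrm{\nb \bfv}_{L^{p}}\lesssim p\,\nrm{\boldsymbol\omg}_{L^{2}\cap L^\ift}$, which follows from $w\in L^{2}\cap L^\ift$ because $L^{d,1}\cap L^\ift\subset L^{2}$ locally. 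Finite energy also needs attention: I expect to use either $d\ge4$ together with $L^{d,1}$, or to work with localized differences.

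The main obstacle is Step~1, namely the $L^\ift$ bound on $u^{r}/r$ and $u^{s}/s$ near the axes and especially near their intersection $r=s=0$. There the reduction to $\bbR^{d+4}$ introduces both weights at once. To handle it, I will first try splitting the region into $r<s$ and $r>s$. In each region I will estimate the kernel of $rs\lap^{-1}_{\bbR^{d+4}}\frac{1}{rs}$ explicitly by integrating out the angular variables, aiming to show that its derivative divided by $r$ is bounded by $|x-y|^{-(d-1)}$ times $(1+\text{weights } 1/r_y,1/s_y)$. This is the estimate that makes the $L^{d,1}$ pairing close.
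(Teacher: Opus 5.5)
Your architecture matches the paper's. It has a weighted $L^\infty$ bound on $u^r/r$ and $u^s/s$, transport estimates for $w$, $w/r$, $w/s$ with $w/(r^ns^m)$ conserved in $L^{d,1}$, a Riccati-type closure on a short interval, and existence and uniqueness by smoothing plus the Danchin/Yudovich argument, which the paper also only cites. The gap is Step 1. It is the only genuinely analytic input, you leave it as a plan, and the route you sketch does not deliver it as stated.

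Set $\Phi:=\Delta^{-1}_{\mathbb{R}^{d+4}}[w/(rs)]$; then $u^r/r=-\big((m+1)\Phi+s\,\partial_s\Phi\big)$. The endpoint Riesz-potential bounds in $\mathbb{R}^{d+4}$ are stated in Lorentz norms with exponents $(d+4)/2$ and $d+4$, taken with respect to $r^{n+2}s^{m+2}\,dr\,ds$. These do not convert to your weighted $L^{d,1}(\mathbb{R}^d)$ norms. For example, absorbing the unbounded factor $s$ through $s\le|s-\bar s|+\bar s$ leads to $\|w/r\|_{L^{d+4,1}(\mathbb{R}^{d+4})}$. Take $w$ to be the indicator of $\{1\le r\le 2,\ R\le s\le R+R^{-m}\}$. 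That norm is of order $R^{2/(d+4)}$, while every norm on your right-hand side stays $O(1)$. So a global potential estimate followed by a change of norms loses. What is needed is a pointwise kernel bound, and your fallback states the target bound without the mechanism that produces it.

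The missing idea is a cancellation in the explicit kernel of Proposition~\ref{prop:bsformula}; this is Lemma~\ref{lem:lwplem}. In $F^r_{n,m}$ the factor $\cos\bar\theta_1$ is odd under $\bar\theta_1\mapsto\pi-\bar\theta_1$, and this map exchanges $X_{--}$ and $X_{+-}$. Folding the $\bar\theta_1$-integral onto $[0,\pi/2]$ therefore replaces $X_{--}^{-d/2}$ by $X_{--}^{-d/2}-X_{+-}^{-d/2}$. Since $X_{+-}-X_{--}=4r\bar r\cos\bar\theta_1$, this difference is $\lesssim r\bar r\,X_{--}^{-d/2}X_{+-}^{-1}$. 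Now combine three bounds:
\begin{itemize}
\item the difference bound just obtained;
\item $|\bar s-s\cos\bar\phi_1|\le X_{--}^{1/2}$;
\item the elementary inequality $\bar r^{2}\le 4X_{+-}$.
\end{itemize}
Together they give $|F^r_{n,m}|\lesssim (r/\bar r)\,K$, where $K$ is the kernel of $\int_{\mathbb{R}^d}|x-\bar x|^{1-d}(\cdot)\,d\bar x$ written in the variables $(\bar r,\bar s)$. Hence
\[
|u^r/r|(x)\lesssim\int_{\mathbb{R}^d}|x-\bar x|^{1-d}\,|w(\bar x)|\,\bar r^{-1}\,d\bar x\lesssim\|w/r\|_{L^{d,1}}
\]
by H\"older's inequality in Lorentz spaces, since $|x|^{1-d}\in L^{d/(d-1),\infty}$. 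The bound for $u^s/s$ follows symmetrically, using the factor $\cos\bar\phi_1$ in $F^s_{n,m}$.

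This yields the sharper estimate $\|u^r/r\|_{L^\infty}\lesssim\|w/r\|_{L^{d,1}}$. It needs no splitting into $r<s$ and $r>s$, and no separate treatment of the corner $r=s=0$. With it in hand, your Steps 2 and 3 go through as in the paper.
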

	
	The second result says that for $d\leq 6$, under some additional conditions on the above initial data $w_{0}$, we have global wellposedness. Moreover, it shows that the $L^{\ift}$-norm of $w$ has the same the rate of growth with that from the axisymmetric flow without swirl (\cite{LJ_optimal, SWZ25}).
	\begin{thm}\label{thm:GWP}
		Let $w_{0}\in (L^{d,1}\cap L^{\ift})(\bbR^{d})$ also 
		satisfy $\bfv_{0}\in L^{2}(\bbR^{d})$ 
		and 
		\begin{equation}\label{eq:decaycondi}
			(1+r^{n}+s^{m})\frac{w_{0}}{r^{n}s^{m}}\in L^{\ift}(\bbR^{d}),\qquad (1+r^{n+m}+s^{n+m})\frac{w_{0}}{r^{n}s^{m}}\in L^{1}(\bbR^{d}).
		\end{equation} 
		Then there exists a unique global-in-time solution $w\in L_{\text{loc}}^{\ift}(0,\ift;(L^{d,1}\cap L^{\ift})(\bbR^{d}))$ of \eqref{eq:Eulereq} with the initial data $w_{0}$, and for any $t\geq0$, it satisfies
		\begin{equation} \label{eq:wttime}
			\nrm{w(t)}_{L^{\ift}(\bbR^{d})}\leq \begin{cases}
				C_{1}
				(1+t)^{4(d-2)/(6-d)},&\quad\text{if}\quad d=4,5,\\
				C_{2}
				e^{C_{3}
					t},&\quad\text{if}\quad d=6,
			\end{cases}
		\end{equation}
		for some constants $C_{1}=C_{1}(n,m,w_{0})>0$, $ C_{2}=C_{2}(n,m,w_{0})>0 $, and $C_{3}=C_{3}(n,m,w_{0})>0$.
	\end{thm}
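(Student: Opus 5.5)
We would argue along the lines of the axisymmetric no-swirl results \cite{LJ_optimal, SWZ25}: combine the local theory of Theorem~\ref{thm:LWP} with an a priori bound on $\nrm{w(t)}_{L^\ift}$, which then serves as a continuation criterion. Write $\xi:=w/(r^ns^m)$. By \eqref{eq:vorticityeq}, $\xi$ is transported by a divergence-free field in $\bbR^d$, so every rearrangement-invariant norm of $\xi$ (in particular its $L^1$ and $L^\ift$ norms) is conserved. We also have the conserved quantities coming from \eqref{eq:decaycondi}: the kinetic energy $\nrm{\bfv}_{L^2}$, and the weighted moments $\int (r^{n+m}+s^{n+m})|\xi|\,d\bfx$.

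The moments need separate treatment. Their time derivatives are $\int \bfu\cdot\nb(r^{n+m})\,\xi$, so they are controlled once we have velocity bounds. Alternatively, by analogy with the impulse $\int r^2\xi$ in the axisymmetric case, we would look for an exact conservation law. To get it, we compute the time derivative using the Biot--Savart kernel associated with \eqref{eq:psiinvlap} and exploit antisymmetry of the kernel.

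Since $|w|\le r^ns^m|\xi|$, large vorticity can only occur where $r$ or $s$ is large. We therefore aim to bound the support growth. Let $R(t)$ be the maximal value of $r$ (respectively $S(t)$ of $s$) on the support of $\xi$, or more robustly a weighted average such as $\int r^{n+m}|\xi|$ replaced by a level-set analysis. Then
\[
\nrm{w(t)}_{L^\ift}\lesssim \nrm{\xi_0}_{L^\ift}\,\sup_{\mathrm{supp}\,\xi(t)} r^n s^m .
\]
We estimate $\dot R\le \nrm{u^r}_{L^\ift}$ and then bound $\nrm{\bfu}_{L^\ift}$ by interpolating between conserved quantities and $\nrm{w}_{L^\ift}$. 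The key ingredient is an estimate of the form
\[
\nrm{\bfu}_{L^\ift}\lesssim \nrm{\bfv}_{L^2}^{a}\,\nrm{\xi}_{L^1\cap L^\ift}^{b}\,\nrm{w}_{L^\ift}^{c}.
\]
We would derive it by splitting the Biot--Savart integral for $u^r,u^s$ (from \eqref{eq:psicurl} and \eqref{eq:psiinvlap}) into near and far regions at a scale $\rho$. The near part is bounded by $\rho\nrm{w}_{L^\ift}$, and the far part by $\rho^{-d/2}\nrm{\bfv}_{L^2}$, or through the weighted $L^1$ norm of $\xi$. Optimizing over $\rho$ gives the exponents.

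Closing the argument gives a differential inequality $\dot R\lesssim R^{\gamma}$. This produces polynomial growth when $\gamma<1$ and exponential growth in the borderline case $\gamma=1$, which should correspond exactly to $d=6$. Here $d=n+m+2$ with $n,m\ge1$. Tracking homogeneities, the threshold should be $\gamma=(d-2)/4$ after using $|w|\lesssim R^{n+m}$. The exponent $4(d-2)/(6-d)$ should then emerge from integrating $\dot R\lesssim R^{(d-2)/4}$ and raising the result to the appropriate power.

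The main obstacle is the anisotropic velocity estimate: the bound on $u^r$ near large $r$ must exploit the decay of the kernel in the bi-polar geometry without losing powers of $s$, and it must also hold near the axes $r=0$ and $s=0$. To handle the axes we would use the representation in \eqref{eq:psiinvlap} through the Laplacian in $\bbR^{n+3}\times\bbR^{m+3}$, together with the weighted assumptions $(1+r^n+s^m)\xi_0\in L^\ift$. Uniqueness and the continuation property come from Theorem~\ref{thm:LWP}, since the weighted $L^{d,1}$ norms are controlled by the conserved $L^1\cap L^\ift$ norms of $\xi$ together with the support bounds.
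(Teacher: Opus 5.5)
There is a genuine gap: the velocity bound you sketch does not close, and the exponent $(d-2)/4$ is asserted rather than derived. With the split you describe (near part $\lesssim \rho\nrm{w}_{L^{\ift}}$, far part $\lesssim \rho^{-d/2}\nrm{\bfv}_{L^{2}}$), optimizing in $\rho$ gives $\nrm{\bfv}_{L^{\ift}}\lesssim \nrm{\bfv}_{L^{2}}^{2/(d+2)}\nrm{w}_{L^{\ift}}^{d/(d+2)}$. Inserting $\nrm{w}_{L^{\ift}}\lesssim R^{d-2}$ then yields $\dot{R}\lesssim R^{d(d-2)/(d+2)}$, with exponent $4/3$, $15/7$, $3$ for $d=4,5,6$. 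Since this exceeds $1$, the comparison ODE blows up in finite time, and the argument gives neither global existence nor the stated rates. Feeding $\nrm{w}_{L^{\ift}}\lesssim R^{d-2}$ back into this isotropic velocity bound simply loses too much.

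Two further ingredients are wrong as stated. First, the moments $\int(r^{n+m}+s^{n+m})|\xi|\,d\bfx\simeq L_{d-2}(t)$ are \emph{not} conserved. Controlling their growth is the heart of the proof and is where the restriction $d\le 6$ comes from; knowing only that their derivative is bounded by velocity bounds does not suffice. Second, a support radius $R(t)$ is not available, because $w_{0}$ is not assumed compactly supported.

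The missing idea is to let the moments, rather than $\nrm{w}_{L^{\ift}}$, carry the growth. The paper proves the Feng--\v{S}ver\'{a}k-type bound $\nrm{\bfv}_{L^{\ift}}\lesssim L_{d-2}^{1/2}\nrm{w_{0}/(r^{n}s^{m})}_{L^{\ift}}^{1/2}$, in which $\nrm{w}_{L^{\ift}}$ does not appear. It follows from the kernel bounds $|F^{r}_{n,m}|,|F^{s}_{n,m}|\lesssim 1/D$, $|F^{r}_{n,m}|\lesssim r/D^{2}$, $|F^{s}_{n,m}|\lesssim s/D^{2}$, together with scaling to $r^{2}+s^{2}=1$. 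The paper then starts from $\frac{d}{dt}P^{r}_{d-2}\le (d-2)\iint r^{d-3}|u^{r}||w|\,dr\,ds$ and writes
\[
r^{d-3}|u^{r}||w|=|u^{r}|^{\frac{d-4}{d-2}}|\xi|^{\frac{1}{d-2}}(r^{n}s^{m}|u^{r}|^{2})^{\frac{1}{d-2}}(r^{d-2}|w|)^{\frac{d-3}{d-2}}.
\]
H\"older, together with conservation of $\nrm{\xi}_{L^{\ift}}$ and of the energy $\iint r^{n}s^{m}|\bfu|^{2}\,dr\,ds$, gives $\frac{d}{dt}P^{r}_{d-2}\lesssim \nrm{u^{r}}_{L^{\ift}}^{(d-4)/(d-2)}(P^{r}_{d-2})^{(d-3)/(d-2)}$. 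Combining this with the Feng--\v{S}ver\'{a}k-type bound yields $\dot{L}_{d-2}\lesssim L_{d-2}^{(3d-10)/(2d-4)}$, whose exponent is $\le 1$ exactly when $d\le 6$. Hence $\nrm{\bfv(t)}_{L^{\ift}}\lesssim 1+t$, $(1+t)^{3}$, $e^{ct}$ for $d=4,5,6$.

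Finally, instead of a support radius, one bounds $w$ along trajectories:
\[
|w(t,\Phi_{t})|\le |\xi_{0}|\Big(r+\int_{0}^{t}\nrm{u^{r}}_{L^{\ift}}\Big)^{n}\Big(s+\int_{0}^{t}\nrm{u^{s}}_{L^{\ift}}\Big)^{m}.
\]
Expanding this and using $w_{0},\,w_{0}/r^{n},\,w_{0}/s^{m},\,\xi_{0}\in L^{\ift}$ is exactly what the hypothesis $(1+r^{n}+s^{m})\xi_{0}\in L^{\ift}$ is for. It gives $\nrm{w(t)}_{L^{\ift}}\lesssim f(t)^{d-2}$ with $f(t)=(1+t)^{2}$, $(1+t)^{4}$, $e^{ct}$, which are the stated rates.
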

	
	\begin{remark}
		The assumptions on $w_0$ in Theorem \ref{thm:GWP} can equivalently be stated as 
		\[ 
		w_{0}\in (L^{d,1}\cap L^{\ift})(\bbR^{d}), \qquad \frac{w_0}{r^ns^m}\in (L^1\cap L^{\infty}) (\mathbb{R}^d),
		\] 
		together with 
		\[
		\frac{w_0}{r^n}, \, \frac{w_0}{s^m} \in L^{\infty}(\mathbb{R}^d), \qquad \frac{r^m w_0}{s^m}, \, \frac{s^n w_0}{r^n} \in L^1 (\mathbb{R}^d).
		\]
		By interpolation, these assumptions imply those of Theorem \ref{thm:LWP}. 
	\end{remark}

	\begin{remark}
		For the case $d=4$, the first condition from \eqref{eq:decaycondi} is guaranteed if we assume $w_{0}\in C^{2}(\bbR^{4})$ (\cite[Prop. A.1]{CJL_gwpbi}). However, for cases $d=5,6$, smoothness of $w_{0}$ does not necessarily imply the $L^{\ift}$ condition in \eqref{eq:decaycondi}.
	\end{remark}

	\subsection{Literatures review}
	
	\subsubsection{Axisymmetric solutions in 3D}
	
	Bi-rotational solutions were motivated by axisymmetric solutions of 3D Euler equations. In $\bbR^{3}$, under the cylindrical coordinates $(r,\tht,z)$, \textit{axisymmetric solutions without swirl} are defined as
	\begin{equation}
		\bfu=u^{r}(r,z)\bfe_{r}+u^{z}(r,z)\bfe_{z},
	\end{equation}
	where the vorticity becomes
	\begin{equation}
		\boldsymbol{\omg}=\omg^{\tht}(r,z)\bfe_{\tht},\quad \omg^{\tht}(r,z)=\rd_{r}u^{z}-\rd_{z}u^{r},
	\end{equation}
	and the Euler equations reduce to
	\begin{equation}
		\rd_{t}\frac{\omg^{\tht}}{r}+(u^{r}\rd_{r}+u^{z}\rd_{z})\frac{\omg^{\tht}}{r}=0.
	\end{equation}
	The global wellposedness of the solution for this equation is well-known from various works (\cite{UY1968, Raymond, SY, Danaxi, AHK}). For such globally wellposed solutions, a natural question one can ask is whether the vorticity can grow as time $t$ goes to infinity. As the term $\omg^{\tht}/r$ is conserved along the flow, the growth of vorticity occurs when the radial distance of fluid particles from the rotation axis increases. The simplest model where such growth can be observed is \textit{the anti-parallel vortex rings}, which satisfy the following conditions:
	\begin{equation}\label{eq:antiparallel}
		\omg^{\tht}(r,-z)=-\omg^{\tht}(r,z),\quad \omg^{\tht}(r,z)\leq 0,\quad r,z\geq0.
	\end{equation}
	This model was considered by Childress, Gilbert, and Valiant (\cite{Child07, Child08, ChilGil1}).
	
	There are several works regarding the upper bound and the lower bound of the growth of the vorticity maximum $\nrm{\omg^{\tht}}_{L^{\ift}}$. For the upper bound, the exponential growth results can be found in Majda--Bertozzi \cite{Majda2002} and Danchin \cite{Danaxi}. The $t^{2}$-upper bound was established by Childress \cite{Child07}, and this can be derived by using an estimate for the velocity maximum from Feng--Sverak \cite{FeSv}. Recently, the $t^{4/3}$-upper bound was obtained by Jeong and the author \cite{LJ_optimal} for compactly supported initial vorticity with $\omg_{0}^{\tht}/r\in L^{\ift}$, and the compact support assumption was relaxed by Shao--Wei--Zhang \cite{SWZ25} and Egamberganov--Yao \cite{EYao25}. In addition, for single-signed vorticity, Maffei--Marchioro \cite{Maffei2001} provides $t^{1/4}\ln(e+t)$-upper bound using the conservation of the fluid impulse $\nrm{r\omg^{\tht}}_{L^{1}}$. For lower bounds, Choi--Jeong established a patch-type solution satisfying \eqref{eq:antiparallel} and $t^{1/15-}$-lower bound. This was enhanced by Gustafson--Miller--Tsai \cite{GMT2023} obtaining $t^{3/8-}$-lower bound. Also, \cite{EYao25} recently improved the lower bound up to $ t^{1/2-} $.
	

	
	
	\subsubsection{High dimension}
	
	There are several works which analyzed axisymmetric Euler equations in high dimensions $d\geq3$ (\cite{CJLglobal22, Limglobal23, GMT2023}). Although such cases are not physical, there are several works suggesting that there is a good chance that finite-time singularity formation of a solution with some smooth initial data can occur when the dimension is large (\cite{DE, Miller, HouZhang22P1, HouZhang22P2}). Recently, Miller \cite{Miller26prep} proved that when $d=4$, the global wellposedness holds for the initial data that satisfies $w_{0}/r^{2}\in L^{2,1}(\bbR^{4})$.
	
	The bi-rotational equation can be considered as a special case of \textit{the lake equation} with the depth function $d(r,s)=r^{n}s^{m}$ for any $n,m\geq1$. This appears in the work of Khesin--Yang \cite{KhYa} and Yang \cite{Yang}. Previously, Choi, Jeong, and the author \cite{CJL_gwpbi} proved the global wellposedness of the bi-rotational solutions with the Yudovich-type regularity and decay near the axes $r=0$, $s=0$, and at infinity for the 4D case $n=m=1$. In particular, they showed that $\nrm{w}_{L^{\ift}}$ has an exponential upper bound in time. As a recent work, Jeong and the author \cite{JL25} proved that for patch-type vorticity, the support diameter grows infinitely large.
	
	
	
	
	
	
	\subsection{Key ideas}
	
	\subsubsection{The extension of Childress' dipole model}
	
	\begin{figure}
		\centering
		\includegraphics[scale=0.3]{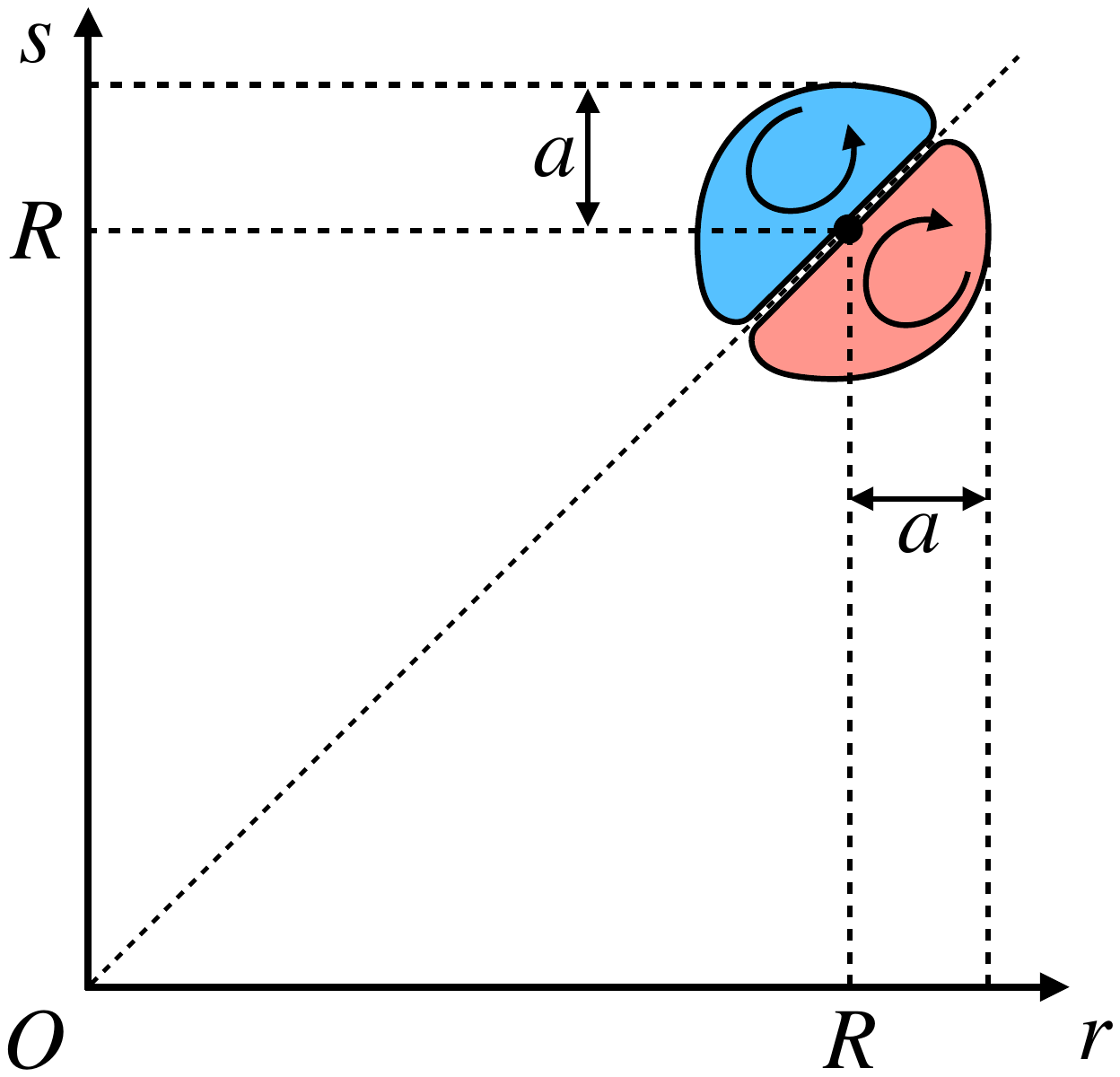}
		\caption{A diagram of Childress' dipole model centered at the point $(R,R)$ with radius $a$.}
		\label{fig:dipole}
	\end{figure}
	
	We begin by considering Childress' dipole model(\cite{Child08, ChilGil1}). This is a circular vortex dipole on the first quadrant $\{ (r,s)\in\bbR^{2} : r,s\geq0\}$ which is centered on the point $(R,R)$ for a given $R>0$ and a radius $a\ll R$. Additionally, $w$ is odd symmetric with respect to the axis $\{r=s\}$ (i.e., $w(s,r)=-w(r,s)$) and non-positive on the region $\{r\leq s \}$ (i.e., $w(r,s)\leq 0$ when $r\leq s$). This is depicted in Figure \ref{fig:dipole}. Then note that the velocity maximum is achieved on the center $(R,R)$ and the kinetic energy is scaled as
	\begin{equation}
		\nrm{u}_{L^{2}}^{2}\sim (R^{d-2}a)^{2}(R^{d-2}a^{2})\sim R^{3d-6}a^{4}.
	\end{equation} Then using the conservation of the kinetic energy, the radius $a$ is scaled as $a\sim R^{-(3d-6)/4}$. Then using that the vorticity maximum $w$ is scaled as $w\sim R^{d-2}$ and the velocity maximum is scaled as $u\sim wa\sim R^{d-2}a$, we obtain the following ODE:
	\begin{equation}
		\dot{R}\sim u\sim R^{d-2}a\sim R^{(d-2)/4},
	\end{equation}
	which gives us
	\begin{equation}
		R\sim \begin{cases}
			1+t^{2},&\quad\text{if}\quad d=4,\\
			1+t^{4},&\quad\text{if}\quad d=5,\\
			e^{ct},&\quad\text{if}\quad d=6.\\
		\end{cases}
	\end{equation}
	This leads to the rate
	\begin{equation}
		w\sim R^{d-2}\sim \begin{cases}
			1+t^{4},&\quad\text{if}\quad d=4,\\
			1+t^{12},&\quad\text{if}\quad d=5,\\
			e^{ct},&\quad\text{if}\quad d=6.\\
		\end{cases}
	\end{equation}
	Such structure is similar with the anti-parallel vortex rings on the half plane $\{r\geq0, z\in \bbR\}$. In fact, this rate coincides with the upper bound for axisymmetric solutions without swirl in $\bbR^{d}$, which was obtained in \cite{SWZ25}.
	
	\subsubsection{Conservation of the relative vorticity $w/(r^{n}s^{m})$ and the vorticity maximum}
	
	The first idea is to use the fact that the relative vorticity $w/(r^{n}s^{m})$ is conserved along the flow:
	\begin{equation}
		\frac{|w(t,\Phi_{t}^{r}(r,s),\Phi_{t}^{s}(r,s))|}{[\Phi_{t}^{r}(r,s)]^{n}[\Phi_{t}^{s}(r,s)]^{m}}=\frac{|w_{0}(r,s)|}{r^{n}s^{m}},
	\end{equation}
	where $\Phi_{t}=(\Phi_{t}^{r},\Phi_{t}^{s})$ is the flow map that uniquely satisfies the ODE
	\begin{equation}
		\begin{aligned}
			\frac{d}{dt}\Phi_{t}^{r}(r,s)=u^{r}(t,\Phi_{t}^{r}(r,s),\Phi_{t}^{s}(r,s)),\quad \Phi_{0}^{r}(r,s)=r,\\
			\frac{d}{dt}\Phi_{t}^{s}(r,s)=u^{s}(t,\Phi_{t}^{r}(r,s),\Phi_{t}^{s}(r,s)),\quad \Phi_{0}^{s}(r,s)=s.
		\end{aligned}
	\end{equation}
	Such conservation allows us to estimate the vorticity maximum by the time integral of the velocity maximum:	
	\begin{equation}\label{eq:wtmax}
		\nrm{w(t)}_{L^{\ift}(\bbR^{d})}\leq C(n,m,w_{0})\left(1+\int_{0}^{t}\nrm{u(\tau)}_{L^{\ift}(\bbR^{d})}d\tau\right)^{n+m},
	\end{equation}
	where $C(n,m,w_{0})>0$ is a constant that depends only on the norms $\nrm{w_{0}}_{L^{\ift}(\bbR^{d})}$, $\nrm{w_{0}/r^{n}}_{L^{\ift}(\bbR^{d})} $, $\nrm{w_{0}/s^{m}}_{L^{\ift}(\bbR^{d})}$, and $\nrm{w_{0}/(r^{n}s^{m})}_{L^{\ift}(\bbR^{d})}$. This naturally leads us to consider an estimate of the velocity maximum.
	
	\subsubsection{The Biot--Savart law and the Feng--\v{S}ver\'{a}k-type estimate}
	
	Next, the velocity maximum is bouneded by the following Feng--\v{S}ver\'{a}k-type estimate \eqref{eq:FStype} from Proposition \ref{prop:FStypeu}:
	\begin{equation}
		\nrm{u}_{L^{\ift}(\bbR^{d})}\lesssim \left[\left\| \frac{r^{m}w}{s^{m}}\right\|_{L^{1}(\bbR^{d})}+\left\| \frac{s^{n}w}{r^{n}}\right\|_{L^{1}(\bbR^{d})}\right]^{1/2}\left\| \frac{w}{r^{n}s^{m}}\right\|_{L^{\ift}(\bbR^{d})}^{1/2}.
	\end{equation}
	We have bounded the $L^{\ift}$-norm of the velocity $u $ by the $L^{\ift}$-norm of the relative velocity $w/(r^{n}s^{m})$ and two radial moments $\nrm{r^{m}w/s^{m}}_{L^{1}}$ and $\nrm{s^{n}w/r^{n}}_{L^{1}}$. 
	Thanks to the scale-invariance of the Biot--Savart law, the range of the estimate narrows down to the unit circle on the first quadrant (Inequality \eqref{eq:FStypescale}):
	\begin{equation}
		\sup_{\substack{r,s\geq0,\\r^{2}+s^{2}=1}}|u^{r}(r,s)|, \sup_{\substack{r,s\geq0,\\r^{2}+s^{2}=1}}|u^{s}(r,s)|\lesssim \left[\left\| \frac{r^{m}w}{s^{m}}\right\|_{L^{1}(\bbR^{d})}+\left\| \frac{s^{n}w}{r^{n}}\right\|_{L^{1}(\bbR^{d})}\right]^{1/2}\left\| \frac{w}{r^{n}s^{m}}\right\|_{L^{\ift}(\bbR^{d})}^{1/2}.
	\end{equation}
	In the previous work of the second author with Choi and Jeong(\cite{CJL_gwpbi}), we obtained an exponential upper bound of the vorticity maximum. Back then, we weren't able to find a way to simplify the bi-rotational Biot--Savart kernel similarly as in the axisymmetric case (\cite{FeSv}). But now, we managed to express the kernel by using a single variable function \textit{twice,} in some sense. This helps us in obtaining appropriate decay rates of the kernel. Due to this, the derivation of the estimate \eqref{eq:FStypescale} on the first quadrang becomes much simpler than the proof of the corresponding estimate from \cite{CJL_gwpbi}.
	
	\subsubsection{Radial moments}
	
	The last key idea is to estimate the time derivative of the radial moments $\nrm{r^{m}w/s^{m}}_{L^{1}}$ and $\nrm{s^{n}w/r^{n}}_{L^{1}}$ in Lemma \ref{lem:ddtPd-2}:
	\begin{align}
		\frac{d}{dt}\left\| \frac{r^{m}w(t)}{s^{m}}\right\|_{L^{1}(\bbR^{d})}&\leq \left\| \frac{w_{0}}{r^{n}s^{m}}\right\|_{L^{\ift}(\bbR^{d})}^{1/(d-2)}\nrm{\bfv_{0}}_{L^{2}(\bbR^{d})}^{2/(d-2)}\nrm{u^{r}(t)}_{L^{\ift}(\bbR^{d})}^{(d-4)/(d-2)}
		\left\| \frac{r^{m}w(t)}{s^{m}}\right\|_{L^{1}(\bbR^{d})}^{(d-3)/(d-2)},\\
		\frac{d}{dt}\left\| \frac{s^{n}w(t)}{r^{n}}\right\|_{L^{1}(\bbR^{d})}&\leq \left\| \frac{w_{0}}{r^{n}s^{m}}\right\|_{L^{\ift}(\bbR^{d})}^{1/(d-2)}\nrm{\bfv_{0}}_{L^{2}(\bbR^{d})}^{2/(d-2)}\nrm{u^{s}(t)}_{L^{\ift}(\bbR^{d})}^{(d-4)/(d-2)}
		\left\| \frac{s^{n}w(t)}{r^{n}}\right\|_{L^{1}(\bbR^{d})}^{(d-3)/(d-2)}.
	\end{align}
	Then combining this with the Feng--\v{S}ver\'{a}k-type estimate \eqref{eq:FStype}, we can obtain the differential inequality of the radial moments in Lemma \ref{lem:Ld-2time}:
	\begin{equation}
		\begin{aligned}
			\frac{d}{dt}&\left[\left\| \frac{r^{m}w(t)}{s^{m}}\right\|_{L^{1}(\bbR^{d})}+ \left\| \frac{s^{n}w(t)}{r^{n}}\right\|_{L^{1}(\bbR^{d})}\right]\lesssim_{n,m,w_{0}}
			\left[\left\| \frac{r^{m}w(t)}{s^{m}}\right\|_{L^{1}(\bbR^{d})}+ \left\| \frac{s^{n}w(t)}{r^{n}}\right\|_{L^{1}(\bbR^{d})}\right]^{(3d-10)/(2d-4)}.
		\end{aligned}
	\end{equation}
	Solving this differential inequality and plugging the rate obtained back into the estimate \eqref{eq:FStype}, we can get the rate of growth of $\nrm{u}_{L^{\ift}}$. Finally, inserting this rate in the estimate \eqref{eq:wtmax} finishes the proof of Theorem \ref{thm:GWP}.
	
	\subsection{Organization}
	In Section \ref{sec:BSlaw}, we recall the derivation of the Biot--Savart law for bi-rotational flows without swirl. In Section \ref{sec:lwp}, we establish the local wellposedness of Yudovich-type solutions for any $n,m\geq1$ by showing that the Biot--Savart kernel is a function in the dual space of $L^{d,1}(\bbR^{d})$. In Section \ref{sec:gwp}, we prove the global wellposedness of the local-in-time solutions for $d\leq6$ from the previous section with additional conditions assuming finite kinetic energy and some decays near the rotationa axes and at infinity. 

\section{The Biot--Savart law}\label{sec:BSlaw}
	
In this section, we present the derivation of the Biot--Savart formula under bi-rotational symmetry and no-swirl condition \eqref{eq:birothighd}. 
	
In $\bbR^{d}$, the Biot--Savart law is given as
\begin{equation}\label{eq:BSlawCartesian}
	v^{i}=\sum_{j=1}^{d}\rd_{x_{j}}\psi^{i,j}=\sum_{j=1}^{d}\rd_{j}\lap^{-1}\omg^{i,j}=\sum_{j=1}^{d}\lap^{-1}\rd_{j}\omg^{i,j}.
\end{equation}
We refer the readers to the book by Chemin \cite{Chemin} for details. We denote
\begin{equation}
\begin{aligned}
\Pi&:=\{(r,s)\in \bbR^{2} : r,s\geq0\},\\
X_{\pm\pm}&:=(r-\br{r})^{2}+2r\br{r}(1\pm\cos\br{\tht}_{1})+(s-\br{s})^{2}+2s\br{s}(1\pm\cos\br{\phi}_{1}),\\
X_{\pm\mp}&:=(r-\br{r})^{2}+2r\br{r}(1\pm\cos\br{\tht}_{1})+(s-\br{s})^{2}+2s\br{s}(1\mp\cos\br{\phi}_{1}).
\end{aligned}
\end{equation}
The explicit Biot--Savart formula is introduced in the following proposition.
	\begin{prop}\label{prop:bsformula}
		Let $(r,s)\in \Pi $. Then we have
		\begin{align}
			u^{r}(r,s)&=-
			\iint_{\Pi}F_{n,m}^{r}(r,s,\br{r},\br{s})w(\br{r},\br{s})
			d\br{r}d\br{s},\\
			F_{n,m}^{r}(r,s,\br{r},\br{s})&=c_{d}\int_{0}^{\pi}\int_{0}^{\pi}
			\frac{\br{r}^{n}\br{s}^{m}\sin^{n-1}\br{\tht}_{1}\cos\br{\tht}_{1}\sin^{m-1}\br{\phi}_{1}(\br{s}-s\cos\br{\phi}_{1})}{X_{--}^{d/2}}d\br{\tht}_{1}
			d\br{\phi}_{1},
		\end{align}
		and
		\begin{align}
			u^{s}(r,s)&=
			\iint_{\Pi}F_{n,m}^{s}(r,s,\br{r},\br{s})w(\br{r},\br{s})
			d\br{r}d\br{s},\\
			F_{n,m}^{s}(r,s,\br{r},\br{s})&=c_{d}\int_{0}^{\pi}\int_{0}^{\pi}
			\frac{\br{r}^{n}\br{s}^{m}\sin^{n-1}\br{\tht}_{1}(\br{r}-r\cos\br{\tht}_{1})\sin^{m-1}\br{\phi}_{1}\cos\br{\phi}_{1}}{X_{--}^{d/2}}d\br{\tht}_{1}
			d\br{\phi}_{1}.
		\end{align}
	\end{prop}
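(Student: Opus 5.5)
We start from the Cartesian Biot--Savart law \eqref{eq:BSlawCartesian}, $v^{i}=\sum_{j}\rd_{j}\lap^{-1}\omg^{i,j}$, written as a convolution with the Newtonian kernel. For $d\geq3$,
\[
v^{i}(\bfx)=c_{d}\int_{\bbR^{d}}\sum_{j}\frac{(x_{j}-y_{j})}{|\bfx-\bfy|^{d}}\,\omg^{i,j}(\bfy)\,d\bfy,
\]
up to sign conventions that we fix at the end. We insert the ansatz \eqref{eq:vorttensca}. For $i\leq n+1$, only the indices $j\geq n+2$ contribute, and there $\omg^{i,j}(\bfy)=-\frac{y_{i}y_{j}}{\br{r}\br{s}}w(\br{r},\br{s})$. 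By symmetry it suffices to evaluate $u^{r}$ at the point $\bfx=(r,0,\dots,0,s,0,\dots,0)$, i.e.\ $x_{1}=r$, $x_{n+2}=s$, all other coordinates zero. Then $u^{r}=v^{1}(\bfx)$, and likewise $u^{s}=v^{n+2}(\bfx)$.

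Next we pass to bi-polar coordinates in $\bfy$. Write $\bfy=(\br{r}\,\sigma,\br{s}\,\eta)$ with $\sigma\in S^{n}$ and $\eta\in S^{m}$, and parametrize each sphere by its first polar angle: $\sigma_{1}=\cos\br{\tht}_{1}$ and $\eta_{1}=\cos\br{\phi}_{1}$. The volume element becomes
\[
\br{r}^{n}\br{s}^{m}\sin^{n-1}\br{\tht}_{1}\sin^{m-1}\br{\phi}_{1}\,d\br{r}\,d\br{s}\,d\br{\tht}_{1}\,d\br{\phi}_{1}\,dS^{n-1}\,dS^{m-1}.
\]
At our special point, $|\bfx-\bfy|^{2}=(r-\br{r})^{2}+2r\br{r}(1-\cos\br{\tht}_{1})+(s-\br{s})^{2}+2s\br{s}(1-\cos\br{\phi}_{1})=X_{--}$, so the kernel depends only on the two polar angles.

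The numerator is $\sum_{j\geq n+2}(x_{j}-y_{j})\,y_{1}y_{j}/(\br{r}\br{s})$. Here $y_{1}/\br{r}=\cos\br{\tht}_{1}$, and
\[
\sum_{j\geq n+2}(x_{j}-y_{j})\frac{y_{j}}{\br{s}}=s\cos\br{\phi}_{1}-\br{s}.
\]
This gives exactly the factor $\cos\br{\tht}_{1}(\br{s}-s\cos\br{\phi}_{1})$ appearing in $F_{n,m}^{r}$, with the overall minus sign. The remaining sphere integrals over $S^{n-1}$ and $S^{m-1}$ are trivial and are absorbed into $c_{d}$. For $u^{s}$ the computation is symmetric: we sum over $j\leq n+1$, using $\omg^{n+2,j}=-\omg^{j,n+2}$ (which accounts for the change in overall sign), and this yields the factor $(\br{r}-r\cos\br{\tht}_{1})\cos\br{\phi}_{1}$.

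The main obstacle is justification rather than algebra. We must check that the Cartesian formula is legitimate for $w\in L^{d,1}\cap L^{\ift}$, which follows by density and the local integrability of $|\bfx-\bfy|^{1-d}$. We must also check that the swap of $\rd_{j}$ with $\lap^{-1}$ is valid, and we must carefully track the sign conventions $\omg^{i,j}=\rd_{j}v^{i}-\rd_{i}v^{j}$ together with the sign of the Newtonian kernel. We would verify these signs on a simple test case, for instance by checking consistency with \eqref{eq:psicurl}. Finally, Fubini lets us reduce to the two-angle integral, since the integrand is absolutely integrable away from the diagonal singularity $X_{--}=0$, which is integrable of order $d-1$.
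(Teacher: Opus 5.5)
Your proposal is correct. It differs from the paper only in where the derivative is placed.

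\emph{The paper's route.} The paper first takes the divergence of the vorticity, writing $v^{1}=-\lap^{-1}\big[\frac{x_{1}}{r}\big(\rd_{s}w+\frac{m}{s}w\big)\big]$. It then convolves with the Newtonian potential, so that after reducing to the two polar angles the kernel is $X_{--}^{1-d/2}$. Finally it integrates by parts in $\br{s}$: the $\frac{m}{\br{s}}w$ term cancels against the derivative of $\br{s}^{m}$, and $\rd_{\br{s}}X_{--}=2(\br{s}-s\cos\br{\phi}_{1})$ produces the factor in $F_{n,m}^{r}$.

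\emph{Your route.} You let $\rd_{j}$ fall on the Newtonian kernel in Cartesian coordinates. The same factor then comes out algebraically from the contraction $\sum_{j\geq n+2}(x_{j}-y_{j})y_{j}=\br{s}(s\cos\br{\phi}_{1}-\br{s})$ at your special point. What this buys is that $w$ is never differentiated. The formula therefore applies directly to $L^{d,1}\cap L^{\ift}$ vorticity, with no distributional $\rd_{s}w$ and no boundary terms at $\br{s}=0$ to discuss. The absolute convergence you need is just the bound $\int|\bfx-\br{\bfx}|^{1-d}|w|\,d\br{\bfx}\lesssim\nrm{w}_{L^{d,1}}$ used in Lemma \ref{lem:lwplem}. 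The paper's version instead keeps the less singular potential until the last step and shows explicitly how the $\frac{m}{s}$ term of the divergence is absorbed.

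\emph{Sign bookkeeping.} The minus sign from $\omg^{1,j}=-y_{1}y_{j}w/(\br{r}\br{s})$ and the one in $s\cos\br{\phi}_{1}-\br{s}$ cancel each other. Take $\Gamma(z)=-|z|^{2-d}/((d-2)|S^{d-1}|)$, so that $\nb\Gamma(z)=z/(|S^{d-1}||z|^{d})$. Then you actually obtain $u^{r}=+\frac{|S^{n-1}||S^{m-1}|}{|S^{d-1}|}\iint(\cdots)\,w$, not an overall minus sign. The statement still holds, with $c_{d}=-|S^{n-1}||S^{m-1}|/|S^{d-1}|<0$. This is exactly what the paper's normalization gives, since there $c_{d}$ starts out as the negative constant in the kernel of $\lap^{-1}$. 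The relative sign between $u^{r}$ and $u^{s}$ is the only sign information the statement carries beyond the unspecified constant, and your antisymmetry step $\omg^{n+2,j}=-\omg^{j,n+2}$ supplies it correctly.
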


	\begin{proof}[Proof of Proposition \ref{prop:bsformula}]
		First, we use the change of coordinates to obtain the relation (cf. \cite{JL25})
		\begin{equation}
			\begin{aligned}
				\frac{u^{r}}{r}=\frac{v^{1}}{x_{1}}=-\frac{1}{x_{1}}\lap^{-1}\bigg[\frac{x_{1}}{r}
				\bigg(\rd_{s}w+\frac{m}{s}w\bigg)\bigg].
			\end{aligned}
		\end{equation}
		Computing this, we have
		\begin{equation}
			\begin{aligned}
				\bigg(\frac{v^{1}}{x_{1}}\bigg)(\bfx)&=-\frac{c_{d}}{x_{1}}\int_{\bbR^{d}}\frac{1}{|\bfx-\br{\bfx}|^{d-2}}\frac{\br{x}_{1}}{\br{r}}\bigg(\rd_{\br{s}}w(\br{\bfx})+\frac{m}{\br{s}}w(\br{\bfx})\bigg)d\br{\bfx}\\
				&=-\frac{c_{d}}{r\cos\tht_{1}}\iint_{\Pi}\int_{0}^{2\pi}
				\int_{[0,\pi]^{m-1}}\int_{0}^{2\pi}
				\int_{[0,\pi]^{n-1}}
				\frac{\br{r}^{n}\br{s}^{m}\cos\br{\tht}_{1}
				}{|x-\br{x}|^{d-2}}\bigg(\rd_{\br{s}}w(\br{r},\br{s})+\frac{m}{\br{s}}w(\br{r},\br{s})\bigg)\\
				&\qquad\cdot\bigg(\br{r}^{n}\br{s}^{m}\prod_{i=1}^{n-1}\sin^{n-i}\br{\tht}_{i}\prod_{j=1}^{m-1}\sin^{m-j}\br{\phi}_{j}\bigg)d\br{\tht}_{1}\cdots d\br{\tht}_{n-1}d\br{\tht}_{n}d\br{\phi}_{1}\cdots d\br{\phi}_{m-1}d\br{\phi}_{m}d\br{r}d\br{s}.
			\end{aligned}
		\end{equation}
		Then due to the rotational invariance of the Laplacian operator $\lap_{\bbR^{n+3}\times\bbR^{m+3}}$ and relations \eqref{eq:psicurl} and \eqref{eq:psiinvlap}, we have
		\begin{equation}
			\begin{aligned}
				\bigg(&\frac{v^{1}}{x_{1}}\bigg)(\bfx)=\bigg(\frac{v^{1}}{x_{1}}\bigg)(r\bfe_{r}+s\bfe_{s})
				\\
				&=-\frac{c_{d}}{r}\iint_{\Pi}\int_{0}^{\pi}\int_{0}^{\pi}\frac{\br{r}^{n}\br{s}^{m}\sin^{n-1}\br{\tht}_{1}\cos\br{\tht}_{1}\sin^{m-1}\br{\phi}_{1}
				}{X_{--}^{d/2-1}}\bigg(\rd_{\br{s}}w(\br{r},\br{s})+\frac{m}{\br{s}}w(\br{r},\br{s})\bigg)d\br{\tht}_{1}d\br{\phi}_{1}d\br{r}d\br{s}\\
				&=-\frac{c_{d}}{r}\iint_{\Pi}\int_{0}^{\pi}\int_{0}^{\pi}\sin^{n-1}\br{\tht}_{1}\cos\br{\tht}_{1}\sin^{m-1}\br{\phi}_{1}
				\bigg[-\rd_{\br{s}}\bigg(\frac{\br{r}^{n}\br{s}^{m}}{X_{--}^{d/2-1}}\bigg)+\frac{m\br{r}^{n}\br{s}^{m-1}}{X_{--}^{d/2-1}}\bigg]w(\br{r},\br{s})d\br{\tht}_{1}d\br{\phi}_{1}d\br{r}d\br{s}\\
				&=-\frac{c_{d}}{r}\iint_{\Pi}\int_{0}^{\pi}\int_{0}^{\pi}\frac{\br{r}^{n}\br{s}^{m}\sin^{n-1}\br{\tht}_{1}\cos\br{\tht}_{1}\sin^{m-1}\br{\phi}_{1}
					(\br{s}-s\cos\br{\phi}_{1})}{X_{--}^{d/2}}w(\br{r},\br{s})d\br{\tht}_{1}d\br{\phi}_{1}d\br{r}d\br{s}.
			\end{aligned}
		\end{equation}
		Thus, we obtain
		\begin{equation}
			\begin{aligned}
				u^{r}(r,s)&=r\bigg(\frac{v^{1}}{x_{1}}\bigg)(r\bfe_{r}+s\bfe_{s})
				\\
				&=-c_{d}\iint_{\Pi}\int_{0}^{\pi}\int_{0}^{\pi}\frac{\br{r}^{n}\br{s}^{m}\sin^{n-1}\br{\tht}_{1}\cos\br{\tht}_{1}\sin^{m-1}\br{\phi}_{1}
					(\br{s}-s\cos\br{\phi}_{1})}{X_{--}^{d/2}}w(\br{r},\br{s})d\br{\tht}_{1}d\br{\phi}_{1}d\br{r}d\br{s}.
			\end{aligned}
		\end{equation}
	We can derive $u^{s}$ in the same way by using the equation
		\begin{equation}
			\begin{aligned}
				\frac{u^{s}}{s}=\frac{v^{n+2}}{x_{n+2}}=\frac{1}{x_{n+2}}\lap^{-1}\bigg[\frac{x_{n+2}}{s}
				\bigg(\rd_{r}w+\frac{n}{r}w\bigg)\bigg].
			\end{aligned}
			\end{equation}
	\end{proof}

	\section{Local wellposedness of the Yudovich-type solution}\label{sec:lwp}
In this section, we show that the Yudovich-type solution of \eqref{eq:vorteq}, where the initial data $w_{0}$ is in $(L^{d,1}\cap L^{\ift})(\bbR^{d})$ and other terms $ w_{0}/(r^{n}s^{m}) $, $w_{0}/r$, and $w_{0}/s$ are in $L^{d,1}(\bbR^{d})$, is locally wellposed.
	
The following lemma shows that $L^{\ift}$-norms of the velocity with or without some weight is bounded by $L^{d,1}$-norms of the vorticity with or without the same weight.
	\begin{lem}\label{lem:lwplem}
		We have
		\begin{align}
			\nrm{\bfv}_{L^{\ift}(\bbR^{d})}
			&\leq C_{d}\nrm{w}_{L^{d,1}(\bbR^{d})},\label{eq:uwLd,1}\\
			\bigg\| \frac{u^{r}}{r}\bigg\|_{L^{\ift}(\bbR^{d})}&\leq C_{d}\bigg\| \frac{w}{r}\bigg\|_{L^{d,1}(\bbR^{d})},\label{eq:uroverrwoverrLd,1}\\
			\bigg\| \frac{u^{s}}{s}\bigg\|_{L^{\ift}(\bbR^{d})}&\leq C_{d}\bigg\| \frac{w}{s}\bigg\|_{L^{d,1}(\bbR^{d})}.\label{eq:usoverswoversLd,1}
		\end{align}
	\end{lem}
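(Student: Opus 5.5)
The first estimate \eqref{eq:uwLd,1} I would prove directly in Cartesian coordinates from \eqref{eq:BSlawCartesian}. The relevant kernel is $\nb_{\bfx}\lap^{-1}$, whose kernel $c_{d}(\bfx-\br{\bfx})/|\bfx-\br{\bfx}|^{d}$ has modulus $c_{d}|\bfx-\br{\bfx}|^{1-d}$. This lies in the weak space $L^{d/(d-1),\ift}(\bbR^{d})$, which is the dual of $L^{d,1}(\bbR^{d})$. By \eqref{eq:vorttensca}, every entry of the vorticity tensor is bounded pointwise by $|w|$, since $|x_{i}x_{j}|\le rs$ for $i\le n+1<j$; the other entries vanish. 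The Lorentz-space Hölder inequality (O'Neil) then gives
\[
|\bfv(\bfx)|\le C_{d}\sum_{i,j}\big\||\cdot|^{1-d}\big\|_{L^{d/(d-1),\ift}}\nrm{\omg^{i,j}}_{L^{d,1}}\le C_{d}\nrm{w}_{L^{d,1}(\bbR^{d})},
\]
uniformly in $\bfx$. Here I use that the Lorentz quasinorm is monotone under pointwise domination, so $\nrm{\omg^{i,j}}_{L^{d,1}}\le\nrm{w}_{L^{d,1}}$.

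For \eqref{eq:uroverrwoverrLd,1} I will use $u^{r}/r=v^{1}/x_{1}$ and evaluate at points with $x_{1}=r$, i.e.\ the other $r$-coordinates vanish, which suffices by symmetry. From \eqref{eq:BSlawCartesian}, for $j\ge n+2$ we have $\omg^{1,j}(\br{\bfx})=\br{x}_{1}g_{j}(\br{\bfx})$ with $g_{j}=-(\br{x}_{j}/\br{s})(w/\br{r})$. Thus $|g_{j}|\le|w|/\br{r}$ and $\nrm{g_{j}}_{L^{d,1}}\le\nrm{w/r}_{L^{d,1}}$. Writing $\br{x}_{1}=x_{1}+(\br{x}_{1}-x_{1})$ splits $v^{1}$ into two terms.

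The first term is $x_{1}\sum_{j}\int\rd_{j}K(\bfx-\br{\bfx})g_{j}(\br{\bfx})d\br{\bfx}$. It is bounded by $x_{1}C_{d}\nrm{w/r}_{L^{d,1}}$ by the same duality argument as for \eqref{eq:uwLd,1}, which is exactly the desired form after dividing by $x_{1}$. The second term is the remainder
\[
R(\bfx)=\sum_{j\ge n+2}\int(\br{x}_{1}-x_{1})\,\rd_{j}K(\bfx-\br{\bfx})\,g_{j}(\br{\bfx})\,d\br{\bfx}.
\]
I must show $|R(\bfx)|\lesssim x_{1}\nrm{w/r}_{L^{d,1}}$. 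The idea is to exploit that $g_{j}$ is even in $\br{x}_{1}$, while the factor $(\br{x}_{1}-x_{1})$ together with the kernel changes sign under $\br{x}_{1}\mapsto-\br{x}_{1}$ up to a shift by $2x_{1}$. Symmetrizing in $\br{x}_{1}$, the integrand of $R$ becomes a difference
\[
(\br{x}_{1}-x_{1})\rd_{j}K(\bfx-\br{\bfx})-(\br{x}_{1}+x_{1})\rd_{j}K(\bfx-\br{\bfx}^{*}),
\]
where $\br{\bfx}^{*}$ is the reflection of $\br{\bfx}$ in $\br{x}_{1}$, and this difference should carry an extra factor $x_{1}$. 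By the mean value theorem in the first coordinate, it should be bounded by $C x_{1}\big(|\bfx-\br{\bfx}|^{1-d}+|\bfx-\br{\bfx}^{*}|^{1-d}\big)$ in the region $|\bfx-\br{\bfx}|\ge 2x_{1}$. In the near region $|\bfx-\br{\bfx}|<2x_{1}$ I would bound each piece crudely: $|\br{x}_{1}\mp x_{1}|\lesssim x_{1}$ there, so the kernels are again $\lesssim x_{1}|\cdot|^{1-d}$. In both regions the effective kernel is $x_{1}$ times a sum of translates/reflections of $|\cdot|^{1-d}$, which lies in $L^{d/(d-1),\ift}$. Duality with $g_{j}\in L^{d,1}$ then yields \eqref{eq:uroverrwoverrLd,1}.

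The estimate \eqref{eq:usoverswoversLd,1} follows identically with the roles of $(r,n)$ and $(s,m)$ swapped, using $u^{s}/s=v^{n+2}/x_{n+2}$. The main obstacle is the remainder $R$: the naive kernel $(\br{x}_{1}-x_{1})\nb K$ is homogeneous of degree $2-d$ and is not in the dual of $L^{d,1}$. The cancellation from the evenness of $g_{j}$ in $\br{x}_{1}$ must therefore be used carefully, especially at the crossover scale $|\bfx-\br{\bfx}|\sim x_{1}$. If this pointwise kernel bookkeeping becomes messy, my fallback is to run the same cancellation on the two-dimensional kernel $F^{r}_{n,m}$ from Proposition \ref{prop:bsformula}. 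There the factor $\cos\br{\tht}_{1}$ integrates to zero against the $\br{\tht}_{1}$-even part of $X_{--}^{-d/2}$, which should produce the gain $r\br{r}/X_{--}$.
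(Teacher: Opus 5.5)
Your proposal is correct; the route for \eqref{eq:uroverrwoverrLd,1} differs from the paper's in execution, though not in the underlying cancellation.

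\eqref{eq:uwLd,1} is done essentially as in the paper, which reaches the same Riesz potential $\int_{\bbR^d}|\bfx-\br{\bfx}|^{1-d}|w(\br{\bfx})|\,d\br{\bfx}$ through the reduced kernel and $(\br{s}-s\cos\br{\phi}_1)^2\le X_{--}$.

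Your fallback for \eqref{eq:uroverrwoverrLd,1} is the paper's proof. It folds $\br{\tht}_1$ onto $[0,\pi/2]$ using the oddness of $\cos\br{\tht}_1$. It then bounds $X_{--}^{-d/2}-X_{+-}^{-d/2}\lesssim r\br{r}\cos\br{\tht}_1/(X_{--}^{d/2}X_{+-})$ via the identity $X_{+-}-X_{--}=4r\br{r}\cos\br{\tht}_1$. Finally it absorbs the surplus $\br{r}$ with $\br{r}^2\le 4X_{+-}$.

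Your main route is the same reflection cancellation in Cartesian variables, organized differently. You peel off $x_1\sum_j\int\rd_jK\,g_j$ and treat the remainder by a mean value argument with a near/far split. The steps you hedge with ``should'' do hold, and you should write them out:
\begin{itemize}
\item Since $j\ge n+2$, $\rd_jK$ is even in its first variable. With $y=\bfx-\br{\bfx}=(y_1,y')$ and $H(y)=y_1\rd_jK(y)$, the symmetrized integrand is $H(y_1-2x_1,y')-H(y_1,y')$.
\item $|\rd_1H(y)|\lesssim|y|^{1-d}$.
\item For $\br{x}_1>0$ and $|y|\ge 2x_1$, the connecting segment stays at distance $\gtrsim|y|$ from the origin. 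In the case $\br{x}_1<x_1$ this is because $|y'|\ge\frac{\sqrt3}{2}|y|$.
\end{itemize}

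The splitting is in fact superfluous. Symmetrizing $\br{x}_1g_j\,\rd_jK(\bfx-\br{\bfx})$ directly produces the factor $(x_j-\br{x}_j)\big(|\bfx-\br{\bfx}|^{-d}-|\bfx-\br{\bfx}^*|^{-d}\big)$. Then $|\bfx-\br{\bfx}^*|^2-|\bfx-\br{\bfx}|^2=4x_1\br{x}_1$ and $\br{x}_1\le|\bfx-\br{\bfx}^*|$ give $\big|\br{x}_1\big(\rd_jK(\bfx-\br{\bfx})-\rd_jK(\bfx-\br{\bfx}^*)\big)\big|\lesssim x_1|\bfx-\br{\bfx}|^{1-d}$ in two lines. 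This is just the Cartesian transcription of the paper's computation.

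The Cartesian formulation makes you independent of Proposition~\ref{prop:bsformula} and of identifying the angular integrals with a Riesz potential. The paper's formulation stays in the reduced variables $(r,s,\br{\tht}_1,\br{\phi}_1)$, which are reused for the kernel bounds of Section~\ref{sec:gwp}.
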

	
	\begin{proof}[Proof of Lemma \ref{lem:lwplem}]
		To prove \eqref{eq:uwLd,1}, we first use
		\begin{equation}
			(\br{s}-s\cos\br{\phi}_{1}
			)^{2}\leq X_{--}
		\end{equation}
		to get that
		\begin{equation}
			\begin{aligned}
				|F_{n,m}^{r}(r,s,\br{r},\br{s})|&\lesssim_{d} \int_{0}^{\pi}\int_{0}^{\pi}\frac{\br{r}^{n}\br{s}^{m}\sin^{n-1}\br{\tht}_{1}\sin^{m-1}\br{\phi}_{1}}{X_{--}^{(d-1)/2}}d\br{\tht}_{1}d\br{\phi}_{1}.
			\end{aligned}
		\end{equation}
		This gives us
\begin{equation}
	\begin{aligned}
		|u^{r}(r,s)|&\lesssim_{d} \iint_{\Pi}\int_{0}^{\pi}\int_{0}^{\pi}\frac{\br{r}^{n}\br{s}^{m}\sin^{n-1}\br{\tht}_{1}\sin^{m-1}\br{\phi}_{1}}{X_{--}^{(d-1)/2}}|w(\br{r},\br{s})|d\br{\tht}_{1}d\br{\phi}_{1}d\br{r}d\br{s}\\
		&\simeq_{d} \int_{\bbR^{d}}\frac{1}{|\bfx-\br{\bfx}|^{d-1}}|w(\br{\bfx})|d\br{\bfx}\lesssim_{d} \nrm{w}_{L^{d,1}(\bbR^{d})},
	\end{aligned}
\end{equation}
where the last line follows from Hölder's inequality in Lorentz spaces, since $|\bfx|^{-(d-1)}\in L^{d/(d-1),\infty}(\bbR^{d})$, the dual space of $L^{d,1}(\bbR^{d})$. In the same way, we can also prove
\begin{equation}
|u^{s}(r,s)|\lesssim_{d}\nrm{w}_{L^{d,1}(\bbR^{d})}.
\end{equation}

Now to show \eqref{eq:uroverrwoverrLd,1}, observe that we have
\begin{equation}
\begin{aligned}
F_{n,m}^{r}(r,s,\br{r},\br{s})
&=c_{d}\int_{0}^{\pi}\int_{0}^{\pi/2}
\br{r}^{n}\br{s}^{m}\sin^{n-1}\br{\tht}_{1}\cos\br{\tht}_{1}\sin^{m-1}\br{\phi}_{1}
(\br{s}-s\cos\br{\phi}_{1})\bigg(\frac{1}{X_{--}^{d/2}}-\frac{1}{X_{+-}^{d/2}}\bigg)d\br{\tht}_{1}d\br{\phi}_{1}.
\end{aligned}
\end{equation}
Since
\begin{equation}
\begin{aligned}
\frac{1}{X_{--}^{d/2}}-\frac{1}{X_{+-}^{d/2}} &= \frac{X_{+-}^{d}-X_{--}^{d}}{X_{--}^{d/2}X_{+-}^{d/2}(X_{+-}^{d/2}+X_{--}^{d/2})}\\
&=\frac{(X_{+-}-X_{--})(X_{+-}^{d-1}+X_{+-}^{d-2}X_{--}+\cdots+X_{+-}X_{--}^{d-2}+X_{--}^{d-1})}{X_{--}^{d/2}X_{+-}^{d/2}(X_{+-}^{d/2}+X_{--}^{d/2})}\\
&=\frac{4r\br{r}\cos\br{\tht}_{1}(X_{+-}^{d-1}+X_{+-}^{d-2}X_{--}+\cdots+X_{+-}X_{--}^{d-2}+X_{--}^{d-1})}{X_{--}^{d/2}X_{+-}^{d/2}(X_{+-}^{d/2}+X_{--}^{d/2})} \\
&\lesssim_{d} \frac{r\br{r}\cos\br{\tht}_{1}X_{+-}^{d-1}}{X_{--}^{d/2}X_{+-}^{d}}=\frac{r\br{r}\cos\br{\tht}_{1}}{X_{--}^{d/2}X_{+-}},
\end{aligned}
\end{equation}
we deduce that
\begin{equation}
\begin{aligned}
|F_{n,m}^{r}(r,s,\br{r},\br{s})|&\lesssim_{d} \int_{0}^{\pi}\int_{0}^{\pi/2}
\frac{r\br{r}^{n+1}\br{s}^{m}\sin^{n-1}\br{\tht}_{1}\cos^{2}\br{\tht}_{1}\sin^{m-1}\br{\phi}_{1}
|\br{s}-s\cos\br{\phi}_{1}|}{X_{--}^{d/2}X_{+-}}d\br{\tht}_{1}d\br{\phi}_{1}\\
&\leq \int_{0}^{\pi}\int_{0}^{\pi/2}
\frac{r\br{r}^{n+1}\br{s}^{m} \sin^{n-1}\br{\tht}_{1}\sin^{m-1}\br{\phi}_{1}}{X_{--}^{(d-1)/2}X_{+-}}d\br{\tht}_{1}d\br{\phi}_{1}.
\end{aligned}
\end{equation}
Next we use the inequality
\begin{equation}
\br{r}^{2}\leq 4X_{+-}.
\end{equation}
Indeed, this holds because if $r^{2}\leq X_{--}$, then $$ \br{r}=(\br{r}-r)+r\leq 2X_{--}^{1/2}, $$ and if $r^{2}> X_{--}$, then $ \br{r}\leq 2r\cos\br{\tht}_{1} $, which gives us $$ \br{r}^{2}\leq 2r\br{r}\cos\br{\tht}_{1}\leq X_{+-}. $$
So, using this inequality, we obtain
\begin{equation}
\begin{aligned}
|F_{n,m}^{r}(r,s,\br{r},\br{s})|&\lesssim_{d}  \int_{0}^{\pi}\int_{0}^{\pi/2}
				\frac{r\br{r}^{n-1}\br{s}^{m}\sin^{n-1}\br{\tht}_{1}\sin^{m-1}\br{\phi}_{1}}{X_{--}^{(d-1)/2}}d\br{\tht}_{1}d\br{\phi}_{1}\\
&\leq \int_{0}^{\pi}\int_{0}^{\pi}\frac{r\br{r}^{n-1}\br{s}^{m}\sin^{n-1}\br{\tht}_{1}\sin^{m-1}\br{\phi}_{1}}{X_{--}^{(d-1)/2}}d\br{\tht}_{1}d\br{\phi}_{1}.
\end{aligned}
\end{equation}
Thus, we arrive at 
\begin{equation}
\begin{aligned}
\bigg|\frac{1}{r}u^{r}(r,s)\bigg|&\leq \frac{1}{r}\iint_{\Pi}|F_{n,m}^{r}(r,s,\br{r},\br{s})||w(\br{r},\br{s})|d\br{r}d\br{s}\\
&\lesssim_{d} \iint_{\Pi}\int_{0}^{\pi}\int_{0}^{\pi}\frac{\br{r}^{n}\br{s}^{m}\sin^{n-1}\br{\tht}_{1}\sin^{m-1}\br{\phi}_{1}}{X_{--}^{(d-1)/2}}\frac{|w(\br{r},\br{s})|}{\br{r}}d\br{\tht}_{1}d\br{\phi}_{1}d\br{r}d\br{s}\\
&\simeq_{d} \int_{\bbR^{d}}\frac{1}{|\bfx-\br{\bfx}|^{d-1}}\frac{|w(\br{\bfx})|}{\br{r}}d\br{\bfx}\lesssim_{d} \bigg\| \frac{w}{r}\bigg\|_{L^{d,1}(\bbR^{d})}.
\end{aligned}
\end{equation}
The estimate \eqref{eq:usoverswoversLd,1} follows by analogous computations.
	\end{proof}
	
Now we prove Theorem \ref{thm:LWP}.
\begin{proof}[Proof of Theorem \ref{thm:LWP}]
It suffices to derive a priori estimates, as the existence of solutions follows by a standard smoothing argument, while uniqueness in the stated class can be proved as in \cite{Danaxi}. Hence, the local well-posedness reduces to establishing a priori estimates.

Applying \eqref{eq:uroverrwoverrLd,1} and \eqref{eq:usoverswoversLd,1} to the equation \eqref{eq:vorteq}, we obtain
\begin{equation}
	\frac{d}{dt}\nrm{w}_{L^{\ift}}\leq \left( n\left\| \frac{u^{r}}{r}\right\|_{L^{\ift}}+m\left\| \frac{u^{s}}{s}\right\|_{L^{\ift}}\right)\nrm{w}_{L^{\ift}}\lesssim_{m,n} \left( \left\| \frac{w}{r}\right\|_{L^{d,1}}+\left\| \frac{w}{s}\right\|_{L^{d,1}}\right)\nrm{w}_{L^{\ift}}
\end{equation}
and	
\begin{equation}
	\frac{d}{dt}\nrm{w}_{L^{d,1}}\leq \left( n\left\| \frac{u^{r}}{r}\right\|_{L^{\ift}}+m\left\| \frac{u^{s}}{s}\right\|_{L^{\ift}}\right)\nrm{w}_{L^{d,1}}\lesssim_{m,n} \left( \left\| \frac{w}{r}\right\|_{L^{d,1}}+\left\| \frac{w}{s}\right\|_{L^{d,1}}\right)\nrm{w}_{L^{d,1}}.
\end{equation}
By Gr\"{o}nwall's inequality, both $\|w\|_{L^{\infty}}$ and $\|w\|_{L^{d,1}}$ remain finite on every finite time interval $[0,T]$, provided that 
\begin{equation} \label{eq:bkm_type}
	\int_0^T \left( \left\|\frac{w(t)}{r}\right\|_{L^{d,1}} + \left\|\frac{w(t)}{s}\right\|_{L^{d,1}} \right)\,dt < \infty. 
\end{equation}
Moreover, since $w/(r^ns^m)$ is conserved along particle trajectories, we also have
\begin{equation}
	\left\| \frac{w(t)}{r^{n}s^{m}}\right\|_{L^{d,1}}=\left\| \frac{w_{0}}{r^{n}s^{m}}\right\|_{L^{d,1}}.
\end{equation}
It therefore remains to study the evolution of $w/r$, $w/s$, and show \eqref{eq:bkm_type} at least for a short time.

Observe that, for any integers $k,l \ge 0$, the quantity $w/(r^ks^l)$ satisfies		
\begin{align}
	\left( \partial_t + u^r \partial_r + u^s \partial_s\right)\frac{w}{r^ks^l} = \left( (n-k)\frac{u^r}{r} + (m-l)\frac{u^s}{s} \right) \frac{w}{r^ks^l}.
\end{align}
In particular, for $w/r$ and $w/s$, we have	
\begin{align}
	\rd_{t}\frac{w}{r}+(u^{r}\rd_{r}+u^{s}\rd_{s})\frac{w}{r}&=\left( (n-1)\frac{u^r}{r} + m\frac{u^s}{s} \right) \frac{w}{r},\\
	\rd_{t}\frac{w}{s}+(u^{r}\rd_{r}+u^{s}\rd_{s})\frac{w}{s}&=\left( n\frac{u^r}{r} + (m-1)\frac{u^s}{s} \right) \frac{w}{s}.
\end{align}
Combining these equations with \eqref{eq:uroverrwoverrLd,1} and \eqref{eq:usoverswoversLd,1} yields 
\begin{align}
	\frac{d}{dt} \left\| \frac{w}{r}\right\|_{L^{d,1}}&\leq \left( (n-1) \left\| \frac{u^{r}}{r}\right\|_{L^{\ift}} + m\left\| \frac{u^{s}}{s}\right\|_{L^{\ift}} \right) \left\| \frac{w}{r}\right\|_{L^{d,1}}\lesssim_{m,n} \left( \left\| \frac{w}{r}\right\|_{L^{d,1}} + \left\| \frac{w}{s}\right\|_{L^{d,1}} \right) \left\| \frac{w}{r}\right\|_{L^{d,1}},\\
	\frac{d}{dt} \left\| \frac{w}{s}\right\|_{L^{d,1}}&\leq \left( n \left\| \frac{u^{r}}{r}\right\|_{L^{\ift}} + (m-1)\left\| \frac{u^{s}}{s}\right\|_{L^{\ift}} \right) \left\| \frac{w}{s}\right\|_{L^{d,1}}\lesssim_{m,n} \left( \left\| \frac{w}{r}\right\|_{L^{d,1}} + \left\| \frac{w}{s}\right\|_{L^{d,1}} \right) \left\| \frac{w}{s}\right\|_{L^{d,1}}.
\end{align}
Consequently,	we get	
\begin{equation}
	\frac{d}{dt}\left( \left\| \frac{w}{r}\right\|_{L^{d,1}}+\left\| \frac{w}{s}\right\|_{L^{d,1}}\right)\lesssim_{m,n}\left( \left\| \frac{w}{r}\right\|_{L^{d,1}}+\left\| \frac{w}{s}\right\|_{L^{d,1}}\right)^{2},
\end{equation}
ensuring the boundedness of $\|w/r\|_{L^{d,1}}+\|w/s\|_{L^{d,1}}$ for some time interval $[0,T]$, where $T$ depends only on $\|w_0/r\|_{L^{d,1}}+\|w_0/s\|_{L^{d,1}}$. This implies \eqref{eq:bkm_type} on $[0,T]$ and finishes our proof.

\end{proof}

	\section{Global well-posedness of the Yudovich-type solution when $d\leq 6$}\label{sec:gwp}
	In this section, we prove the global wellposedness of the local-in-time solution $w$ from the previous section with some additional decay conditions on the initial data $w_{0}$ when the dimension is less than or equal to $6$.
	
	\subsection{Global velocity bound}
	
	In this subsection, our goal is to prove a global bound of the velocity $\bfv$ of Feng--Sverak-type (\cite{FeSv}). To do this, we introduce some technical lemmas on the decay property of the Biot--Savart kernel. \\ First, the following lemma is about the decay rate of some elliptic integral, which is going to be used many times in the proof of the next lemma.
	\begin{lem}\label{lem:calAdecay}
		Let $\alp\in \{
		0,1\}$, $\bt\geq0$, $\gmm>(\bt+1)/2$, and define
		\begin{equation}
			\calA_{\alp,\bt,\gmm}(\tau):=\int_{0}^{\pi}\frac{\cos^{\alp}\tht\sin^{\bt}\tht}{[\tau+2(1-\cos\tht)]^{\gmm}}d\tht,\quad 
			\tau>0.
		\end{equation}
		Then for any $\tau>0$, we have
		\begin{equation}\label{eq:calAdecay}
			0\leq\calA_{\alp,\bt,\gmm}(\tau)\lesssim_{\alp,\bt,\gmm} \min\lbrace \tau^{-(\gmm-(\bt+1)/2)}, \tau^{-(\gmm+\alp)}\rbrace.
		\end{equation}
	\end{lem}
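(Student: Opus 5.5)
The plan is to handle the sign claim and the two regimes $\tau\le 1$ and $\tau\ge 1$ separately. The observation that makes this enough is that $\gmm-(\bt+1)/2>0$ and $\gmm+\alp\geq\gmm-(\bt+1)/2$. Hence for $\tau\le1$ the minimum in \eqref{eq:calAdecay} equals $\tau^{-(\gmm-(\bt+1)/2)}$, and for $\tau\ge1$ it equals $\tau^{-(\gmm+\alp)}$. So it suffices to prove the first bound for $\tau\le 1$ and the second for $\tau\ge1$.

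\emph{Nonnegativity.} For $\alp=0$ the integrand is nonnegative, so there is nothing to do. For $\alp=1$, I would split $[0,\pi]=[0,\pi/2]\cup[\pi/2,\pi]$ and substitute $\tht\mapsto\pi-\tht$ on the second piece. This gives
\begin{equation}
\calA_{1,\bt,\gmm}(\tau)=\int_{0}^{\pi/2}\cos\tht\sin^{\bt}\tht\Big([\tau+2(1-\cos\tht)]^{-\gmm}-[\tau+2(1+\cos\tht)]^{-\gmm}\Big)d\tht .
\end{equation}
On $[0,\pi/2]$ we have $\cos\tht\ge0$, and the bracket is nonnegative because $1-\cos\tht\le 1+\cos\tht$. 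This proves $\calA_{1,\bt,\gmm}\ge0$.

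\emph{Regime $\tau\le1$.} Using $|\cos\tht|\le1$, it suffices to bound $\int_0^\pi \sin^\bt\tht\,[\tau+2(1-\cos\tht)]^{-\gmm}d\tht$.
\begin{itemize}
\item On $[\pi/2,\pi]$ the denominator is at least $2^\gmm$, so this part is $O(1)\le O(\tau^{-(\gmm-(\bt+1)/2)})$.
\item On $[0,\pi/2]$, use $2(1-\cos\tht)\ge c\tht^2$ and $\sin\tht\le\tht$ to bound this part by $\int_0^{\infty}\tht^\bt(\tau+c\tht^2)^{-\gmm}d\tht$.
\item Substituting $\tht=\sqrt\tau\,x$ turns this into $\tau^{(\bt+1)/2-\gmm}\int_0^\infty x^\bt(1+cx^2)^{-\gmm}dx$. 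The last integral is finite precisely because $2\gmm-\bt>1$, which is the hypothesis $\gmm>(\bt+1)/2$.
\end{itemize}

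\emph{Regime $\tau\ge1$.} For $\alp=0$, the denominator is at least $\tau^\gmm$, so $\calA\le\pi\tau^{-\gmm}$ directly. For $\alp=1$ an extra factor $\tau^{-1}$ is needed, and it comes from the cancellation in the symmetrized formula above. By the mean value theorem applied to $x\mapsto x^{-\gmm}$ on $[\tau+2-2\cos\tht,\tau+2+2\cos\tht]\subset[\tau,\infty)$, the bracket is at most $4\gmm\cos\tht\,\tau^{-\gmm-1}$. Integrating gives $\calA_{1,\bt,\gmm}(\tau)\lesssim\tau^{-(\gmm+1)}$.

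The main obstacle is the extra decay for $\alp=1$ at large $\tau$. It requires using the odd symmetry of $\cos$ about $\pi/2$ rather than estimating absolute values, and the same symmetrization also handles the sign claim.
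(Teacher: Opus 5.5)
Your proof is correct. The nonnegativity step and the regime $\tau\le 1$ coincide with the paper's argument; the paper cuts at a small $\veps$ rather than at $\pi/2$, which is immaterial.

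The genuine difference is in the regime $\tau\ge1$. The paper sets $z=1/\tau$ and writes $\calA_{\alp,\bt,\gmm}(1/z)=z^{\gmm}\int_0^\pi\cos^{\alp}\tht\sin^{\bt}\tht\,[1+2(1-\cos\tht)z]^{-\gmm}\,d\tht$. It then Taylor expands $(1+a)^{-\gmm}=1-\gmm a+O(a^2)$ and gets the extra factor $z$ for $\alp=1$ from the vanishing of $\int_0^\pi\cos\tht\sin^{\bt}\tht\,d\tht$. For $\alp=0$ it uses the same expansion where you use the trivial bound $\calA\le\pi\tau^{-\gmm}$. You instead reuse the symmetrized identity from the sign step and apply the mean value theorem.

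Both arguments exploit the same oddness of $\cos$ about $\pi/2$, but yours is non-asymptotic. It gives $\calA_{1,\bt,\gmm}(\tau)\le 4\gmm\tau^{-\gmm-1}\int_0^{\pi/2}\cos^2\tht\sin^{\bt}\tht\,d\tht$ for every $\tau>0$ directly. The paper's expansion is stated as $z\to0^+$ and needs one more remark to cover all $\tau\ge1$, where $a=2(1-\cos\tht)z$ ranges up to $4$. One should note that the remainder is $O(a^2)$ uniformly for $a\ge 0$, e.g.\ because the second derivative of $(1+a)^{-\gmm}$ is bounded by $\gmm(\gmm+1)$ there.

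What the paper's expansion buys is the exact leading asymptotics, $\calA_{1,\bt,\gmm}(\tau)=2\gmm\tau^{-\gmm-1}\int_0^\pi\cos^2\tht\sin^{\bt}\tht\,d\tht+O(\tau^{-\gmm-2})$, which shows the rate is sharp. Your constant is exactly this leading coefficient. Bounding the mean-value point above by $\tau+4$, instead of below by $\tau$, gives you the matching lower bound at no extra cost.
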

	
	\begin{proof}[Proof of Lemma \ref{lem:calAdecay}]
		The non-negativity of $\calA_{\alp,\bt,\gmm}(\tau)$ follows from
		\begin{equation}
			\calA_{\alp,\bt,\gmm}(\tau)=\int_{0}^{\pi/2}\left[ \frac{\cos^{\alp}\tht\sin^{\bt}\tht}{[\tau+2(1-\cos\tht)]^{\gmm}}+(-1)^{\alp}\frac{\cos^{\alp}\tht\sin^{\bt}\tht}{[\tau+2(1+\cos\tht)]^{\gmm}}\right]d\tht\geq0.
		\end{equation}
		We want to show that we have
		\begin{align}
			\calA_{\alp,\bt,\gmm}(\tau)\lesssim_{\bt,\gmm}\tau^{-(\gmm-(\bt+1)/2)}
			&\quad\text{for any}
			\quad 0<\tau<1,
			\label{eq:calAslow}\\
			\calA_{\alp,\bt,\gmm}(\tau)\lesssim_{\alp,\gmm}\tau^{-(\gmm+\alp)}
			&\quad\text{for any}
			\quad \tau\geq1.
			\label{eq:calAfast}
		\end{align}
		To prove \eqref{eq:calAslow}, note that we have
		\begin{equation}
			\sin\tht=\tht+O(\tht^{2}),\quad 1-\cos\tht=\frac{\tht^{2}}{2}+O(\tht^{4})\quad\text{as}\quad \tht\to0^{+}.
		\end{equation}
		That is, we can choose a sufficiently small $\veps>0$ that satisfies
		\begin{equation}
			\sin\tht\leq \tht,\quad 1-\cos\tht\geq \frac{\tht^{2}}{2}\quad\text{for any}\quad \tht\in[0,\veps].
		\end{equation}
		Also, we can choose a constant $c>0$ satisfying
		\begin{equation}
			1-\cos\tht\geq \frac{c}{2}\quad\text{for any}\quad \tht\in [\veps,\pi].
		\end{equation}
		Then for any $0<\tau<1$, 
		we have
		\begin{equation}
			\begin{aligned}
				\calA_{\alp,\bt,\gmm}(\tau)&=\left[\int_{0}^{\veps}+\int_{\veps}^{\pi}\right]\frac{\cos^{\alp}\tht\sin^{\bt}\tht}{[\tau+2(1-\cos\tht)]^{\gmm}}d\tht\\
				&\leq \int_{0}^{\veps}\frac{\tht^{\bt}}{(\tau+\tht^{2})^{\gmm}}d\tht+\int_{\veps}^{\pi}\frac{\cos^{\alp}\tht\sin^{\bt}\tht}{[\tau+2(1-\cos\tht)]^{\gmm}}d\tht\\
				&=\int_{0}^{\veps/\sqrt{\tau}}\frac{\tau^{\bt/2}\phi^{\bt}}{\tau^{\gmm}(1+\phi^{2})^{\gmm}}\sqrt{\tau}d\phi+\frac{\tau^{\gmm-(\bt+1)/2}}{\tau^{\gmm-(\bt+1)/2}}\int_{\veps}^{\pi}\frac{\cos^{\alp}\tht\sin^{\bt}\tht}{[\tau+2(1-\cos\tht)]^{\gmm}}d\tht.
			\end{aligned}
		\end{equation}
		
		\begin{equation}
			\begin{aligned}
				\calA_{\alp,\bt,\gmm}(\tau)&\leq\frac{1}{\tau^{\gmm-(\bt+1)/2}}\int_{0}^{\ift}\frac{\phi^{\bt}}{(1+\phi^{2})^{\gmm}}d\phi+\frac{1}{\tau^{\gmm-(\bt+1)/2}}\int_{\veps}^{\pi}\frac{1}{c^{\gmm}}d\tht\\
				&\lesssim_{\bt,\gmm}\frac{1}{\tau^{\gmm-(\bt+1)/2}}.
			\end{aligned}
		\end{equation}
		Now to prove \eqref{eq:calAfast}, we define $z:=1/\tau$ and 
		\begin{equation}
			\begin{aligned}
				g(z)&:=\calA_{\alp,\bt,\gmm}(1/z)=\int_{0}^{\pi}\frac{\cos^{\alp}\tht\sin^{\bt}\tht}{[1/z+2(1-\cos\tht)]^{\gmm}}d\tht=z^{\gmm}\int_{0}^{\pi}\frac{\cos^{\alp}\tht\sin^{\bt}\tht}{[1+2(1-\cos\tht)z]^{\gmm}}d\tht
			\end{aligned}
		\end{equation}
		Then we use the expansion
		\begin{equation}
			\frac{1}{(1+a)^{\gmm}}=1-\gmm a+O(a^{2})\quad\text{as}\quad a\to 0^{+}
		\end{equation}
		to get
		\begin{equation}
			\begin{aligned}
				g(z)&=z^{\gmm}\left[ \int_{0}^{\pi}\cos^{\alp}\tht\sin^{\bt}\tht\tht-\gmm 2z\int_{0}^{\pi}\cos^{\alp}\tht\sin^{\bt}\tht(1-\cos\tht)d\tht+O(z^{2})\right]\\
				&=C_{\bt,\gmm}z^{\gmm+\alp}+O(z^{\gmm+\alp+1}),
			\end{aligned}
		\end{equation}
		as $z\to0^{+}$. Thus, we obtain \eqref{eq:calAfast}.
	\end{proof}

	Now we let $(r,s), (\br{r},\br{s})\in \Pi$ and denote $D^{2}:=(r-\br{r})^{2}+(s-\br{s})^{2}$. Then we can rewrite the kernels $ F_{n,m}^{r} $ and $F_{n,m}^{s}$ as
	\begin{equation}\label{eq:Fnmr}
		\begin{aligned}
			&F_{n,m}^{r}(r,s,\br{r},\br{s})=c_{d}\int_{0}^{\pi}\int_{0}^{\pi}
			\frac{\br{r}^{n}\br{s}^{m}\sin^{n-1}\br{\tht}_{1}\cos\br{\tht}_{1}\sin^{m-1}\br{\phi}_{1}(\br{s}-s\cos\br{\phi}_{1})}{X_{--}^{d/2}}d\br{\tht}_{1}
			d\br{\phi}_{1}\\
			\qquad&\simeq_{d}\int_{0}^{\pi}\br{r}^{n}\br{s}^{m}\sin^{m-1}\br{\phi}_{1}(\br{s}-s\cos\br{\phi_{1}})\int_{0}^{\pi}\frac{\sin^{n-1}\br{\tht}_{1}\cos\br{\tht_{1}}}{(r\br{r})^{d/2}\left[\frac{D^{2}+2s\br{s}(1-\cos\br{\phi}_{1})}{r\br{r}}+2(1-\cos\br{\tht}_{1})\right]^{d/2}}d\br{\tht}_{1}d\br{\phi}_{1}\\
			\qquad&=\int_{0}^{\pi}\frac{\br{r}^{n}\br{s}^{m}\sin^{m-1}\br{\phi}_{1}(\br{s}-s\cos\br{\phi_{1}})}{(r\br{r})^{d/2}}\calA_{1,n-1,d/2}\left(\frac{D^{2}+2s\br{s}(1-\cos\br{\phi}_{1})}{r\br{r}}\right)d\br{\phi}_{1},
		\end{aligned}
	\end{equation}
	and
	\begin{equation}\label{eq:Fnms}
		\begin{aligned}
			&F_{n,m}^{s}(r,s,\br{r},\br{s})=c_{d}\int_{0}^{\pi}\int_{0}^{\pi}
			\frac{\br{r}^{n}\br{s}^{m}\sin^{n-1}\br{\tht}_{1}(\br{r}-r\cos\br{\tht}_{1})\sin^{m-1}\br{\phi}_{1}\cos\br{\phi}_{1}}{X_{--}^{d/2}}d\br{\tht}_{1}
			d\br{\phi}_{1}\\
			\qquad&\simeq_{d}\int_{0}^{\pi}\br{r}^{n}\br{s}^{m}\sin^{n-1}\br{\tht}_{1}(\br{r}-r\cos\br{\tht_{1}})\int_{0}^{\pi}\frac{\sin^{m-1}\br{\phi}_{1}\cos\br{\phi_{1}}}{(s\br{s})^{d/2}\left[\frac{D^{2}+2r\br{r}(1-\cos\br{\tht}_{1})}{s\br{s}}+2(1-\cos\br{\phi}_{1})\right]^{d/2}}d\br{\phi}_{1}d\br{\tht}_{1}\\
			\qquad&=\int_{0}^{\pi}\frac{\br{r}^{n}\br{s}^{m}\sin^{n-1}\br{\tht}_{1}(\br{r}-r\cos\br{\tht_{1}})}{(s\br{s})^{d/2}}\calA_{1,m-1,d/2}\left(\frac{D^{2}+2r\br{r}(1-\cos\br{\tht}_{1})}{s\br{s}}\right)d\br{\tht}_{1}.
		\end{aligned}
	\end{equation}
	Moreover, they satisfy the following decay property.
	\begin{lem}\label{lem:Frslow}
		For any $(r,s), (\br{r},\br{s})\in \Pi$, the kernels $F_{n,m}^{r}$ and $F_{n,m}^{s}$ satisfy
		\begin{equation}\label{eq:Frslow}
			|F_{n,m}^{r}(r,s,\br{r},\br{s})|, |F_{n,m}^{s}(r,s,\br{r},\br{s})|\lesssim_{n,m} \frac{1}{D}.
		\end{equation} 
	\end{lem}
	
	\begin{proof}[Proof of Lemma \ref{lem:Frslow}]
		Throughout the proof, we often use the following inequality
		\begin{equation}\label{eq:basic}
			r, \br{r}\lesssim |r-\br{r}|+(r\br{r})^{1/2},\quad s, \br{s}\lesssim |s-\br{s}|+(s\br{s})^{1/2},
		\end{equation}
		which is obtained by
		\begin{equation}
			\begin{aligned}
				r&\leq r^{1/2}[|r-\br{r}|+\br{r}]^{1/2}\lesssim r^{1/2}|r-\br{r}|^{1/2}+(r\br{r})^{1/2}\leq \frac{r}{2}+\frac{|r-\br{r}|}{2}+(r\br{r})^{1/2},\\
				\br{r}&\leq \br{r}^{1/2}[|r-\br{r}|+r]^{1/2}\lesssim \frac{\br{r}}{2}+\frac{|r-\br{r}|}{2}+(r\br{r})^{1/2}.
			\end{aligned}
		\end{equation}
		First, we use the inequality \eqref{eq:basic} to get
		\begin{equation}
			\begin{aligned}
				&|F_{n,m}^{r}(r,s,\br{r},\br{s})|\lesssim \int_{0}^{\pi}\frac{\br{r}^{n}\br{s}^{m}\sin^{m-1}\br{\phi}_{1}|\br{s}-s\cos\br{\phi_{1}}|}{(r\br{r})^{d/2}}
				\calA_{1,n-1,d/2}\left(\frac{D^{2}+2s\br{s}(1-\cos\br{\phi}_{1})}{r\br{r}}\right)d\br{\phi}_{1}\\
				\quad&=\int_{0}^{\pi}\frac{r\br{r}^{n+1}\br{s}^{m}\sin^{m-1}\br{\phi}_{1}|\br{s}-s\cos\br{\phi_{1}}|}{r^{d/2+1}\br{r}^{d/2+1}}\calA_{1,n-1,d/2}\left(\frac{D^{2}+2s\br{s}(1-\cos\br{\phi}_{1})}{r\br{r}}\right)d\br{\phi}_{1}\\
				\quad&\lesssim 
				\int_{0}^{\pi}\frac{[|r-\br{r}|^{n+2}+(r\br{r})^{n/2+1}]\br{s}^{m}\sin^{m-1}\br{\phi}_{1}|\br{s}-s\cos\br{\phi_{1}}|}{r^{d/2+1}\br{r}^{d/2+1}}\calA_{1,n-1,d/2}\left(\frac{D^{2}+2s\br{s}(1-\cos\br{\phi}_{1})}{r\br{r}}\right)d\br{\phi}_{1}.
			\end{aligned}
		\end{equation}
		Then using the decay property
		\begin{equation}
			\calA_{1,n-1,d/2}(\tau)\lesssim_{n,m} \min\{ \tau^{-(m/2+1)},\tau^{-(d/2+1)}\}
		\end{equation}
		of the integral $ \calA_{1,n-1,d/2} $ from \eqref{eq:calAdecay} in Lemma \ref{lem:calAdecay}, we have
		\begin{equation}
			\begin{aligned}
				&|F_{n,m}^{r}(r,s,\br{r},\br{s})|\\
				\quad&\lesssim _{n,m}
				\int_{0}^{\pi}\frac{\br{s}^{m}\sin^{m-1}\br{\phi}_{1}|\br{s}-s\cos\br{\phi_{1}}|}{r^{d/2+1}\br{r}^{d/2+1}}\\
				&\qquad\cdot\left[|r-\br{r}|^{n+2}\left( \frac{r\br{r}}{D^{2}+2s\br{s}(1-\cos\br{\phi}_{1})}\right)^{d/2+1}+(r\br{r})^{n/2+1} \left( \frac{r\br{r}}{D^{2}+2s\br{s}(1-\cos\br{\phi}_{1})}\right)^{m/2+1}\right]d\br{\phi}_{1}\\
				\quad&\lesssim \int_{0}^{\pi}\frac{
					\br{s}^{m}\sin^{m-1}\br{\phi}_{1}}{[D^{2}+2s\br{s}(1-\cos\br{\phi}_{1})]^{(m+1)/2}}d\br{\phi}_{1}=\frac{\br{s}^{m}}{(s\br{s})^{(m+1)/2}}\calA_{0,m-1,(m+1)/2}\left(\frac{D^{2}}{s\br{s}}\right).
			\end{aligned}
		\end{equation}
		Then we use the inequality \eqref{eq:basic} and the decay
		\begin{equation}
			\calA_{0,m-1,(m+1)/2}(\tau)\lesssim_{m} \min\{ \tau^{-(m+1)/2},\tau^{-1/2}\}
		\end{equation}
		from Lemma \ref{lem:calAdecay} to obtain
		\begin{equation}
			\begin{aligned}
				&|F_{n,m}^{r}(r,s,\br{r},\br{s})|\lesssim \frac{[|s-\br{s}|^{m}+(s\br{s})^{m/2}]}{(s\br{s})^{(m+1)/2}}\calA_{0,m-1,(m+1)/2}\left(\frac{D^{2}}{s\br{s}}\right)
				\\
				\quad&\lesssim_{m} \frac{
					1}{(s\br{s})^{(m+1)/2}}\left[|s-\br{s}|^{m}\left(\frac{s\br{s}}{D^{2}}\right)^{(m+1)/2}+(s\br{s})^{m/2}\left(\frac{s\br{s}}{D^{2}}\right)^{1/2}\right]\lesssim \frac{1
				}{D}.
			\end{aligned}
		\end{equation}
		Similarly, we use the inequality \eqref{eq:basic} to get
		\begin{equation}
			\begin{aligned}
				&|F_{n,m}^{s}(r,s,\br{r},\br{s})|\lesssim \int_{0}^{\pi}\frac{\br{r}^{n}\br{s}^{m}\sin^{n-1}\br{\tht}_{1}|\br{r}-r\cos\br{\tht_{1}}|}{(s\br{s})^{d/2}}
				\calA_{1,m-1,d/2}\left(\frac{D^{2}+2r\br{r}(1-\cos\br{\tht}_{1})}{s\br{s}}\right)d\br{\tht}_{1}\\
				\quad&=\int_{0}^{\pi}\frac{\br{r}^{n}s\br{s}^{m+1}\sin^{n-1}\br{\tht}_{1}|\br{r}-r\cos\br{\tht_{1}}|}{s^{d/2+1}\br{s}^{d/2+1}}\calA_{1,m-1,d/2}\left(\frac{D^{2}+2r\br{r}(1-\cos\br{\tht}_{1})}{s\br{s}}\right)d\br{\tht}_{1}\\
				\quad&\lesssim 
				\int_{0}^{\pi}\frac{\br{r}^{n}[|s-\br{s}|^{m+2}+(s\br{s})^{m/2+1}]\sin^{n-1}\br{\tht}_{1}|\br{r}-r\cos\br{\tht_{1}}|}{s^{d/2+1}\br{s}^{d/2+1}}\calA_{1,m-1,d/2}\left(\frac{D^{2}+2r\br{r}(1-\cos\br{\tht}_{1})}{s\br{s}}\right)d\br{\tht}_{1}.
			\end{aligned}
		\end{equation}
		Then we use the decay property
		\begin{equation}
			\calA_{1,m-1,d/2}(\tau)\lesssim_{n,m} \min\{ \tau^{-(n/2+1)},\tau^{-(d/2+1)}\},
		\end{equation}
		which gives us
		\begin{equation}
			\begin{aligned}
				&|F_{n,m}^{s}(r,s,\br{r},\br{s})|\\
				\quad&\lesssim 
				\int_{0}^{\pi}\frac{\br{r}^{n}\sin^{n-1}\br{\tht}_{1}|\br{r}-r\cos\br{\tht_{1}}|}{s^{d/2+1}\br{s}^{d/2+1}}\\
				&\qquad\cdot\left[|s-\br{s}|^{m+2}\left( \frac{s\br{s}}{D^{2}+2r\br{r}(1-\cos\br{\tht}_{1})}\right)^{d/2+1}+(s\br{s})^{m/2+1} \left( \frac{s\br{s}}{D^{2}+2r\br{r}(1-\cos\br{\tht}_{1})}\right)^{n/2+1}\right]d\br{\tht}_{1}\\
				\quad&\lesssim \int_{0}^{\pi}\frac{
					\br{r}^{n}\sin^{n-1}\br{\tht}_{1}}{[D^{2}+2r\br{r}(1-\cos\br{\tht}_{1})]^{(n+1)/2}}d\br{\tht}_{1}=\frac{\br{r}^{n}}{(r\br{r})^{(n+1)/2}}\calA_{0,n-1,(n+1)/2}\left(\frac{D^{2}}{r\br{r}}\right).
			\end{aligned}
		\end{equation}
		Finally, we use the decay
		\begin{equation}
			\calA_{0,n-1,(n+1)/2}(\tau)\lesssim_{n} \min\{ \tau^{-(n+1)/2},\tau^{-1/2}\}
		\end{equation}
		to obtain
		\begin{equation}
			\begin{aligned}
				&|F_{n,m}^{s}(r,s,\br{r},\br{s})|\lesssim 
				\frac{[|r-\br{r}|^{n}+(r\br{r})^{n/2}]}{(r\br{r})^{(n+1)/2}}\calA_{0,n-1,(n+1)/2}\left(\frac{D^{2}}{r\br{r}}\right)
				\\
				\quad&\lesssim_{n} \frac{
					1}{(r\br{r})^{(n+1)/2}}\left[|r-\br{r}|^{n}\left(\frac{r\br{r}}{D^{2}}\right)^{(n+1)/2}+(r\br{r})^{n/2}\left(\frac{r\br{r}}{D^{2}}\right)^{1/2}\right]\lesssim \frac{1
				}{D}.
			\end{aligned}
		\end{equation}
	\end{proof}
	Similarly, the kernels also have the following decay rate.
	\begin{lem}\label{lem:FrFsweight}
		For any $(r,s), (\br{r},\br{s})\in \Pi$, the kernels $F_{n,m}^{r}$ and $F_{n,m}^{s}$ satisfy
		\begin{align}
			|F_{n,m}^{r}(r,s,\br{r},\br{s})|\lesssim \frac{r}{D^{2}},\label{eq:Frweight}\\
			|F_{n,m}^{s}(r,s,\br{r},\br{s})|\lesssim \frac{s}{D^{2}}.\label{eq:Fsweight}
		\end{align}
	\end{lem}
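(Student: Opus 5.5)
The plan is to reuse the cancellation argument from the proof of Lemma \ref{lem:lwplem}, which already produces a factor of $r$. That reduces \eqref{eq:Frweight} to showing that a positive, $(-2)$-homogeneous kernel is bounded by $D^{-2}$. For this last step we use the one-variable integrals $\calA_{\alp,\bt,\gmm}$ of Lemma \ref{lem:calAdecay}, exactly as in the proof of Lemma \ref{lem:Frslow}. The bound \eqref{eq:Fsweight} then follows by swapping the roles $(r,\br{\tht}_1,n)\leftrightarrow(s,\br{\phi}_1,m)$.

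\emph{Step 1 (gaining the factor $r$).} As in the proof of Lemma \ref{lem:lwplem}, split $\br{\tht}_1\in[0,\pi/2]\cup[\pi/2,\pi]$ to write $F^r_{n,m}$ with the difference $X_{--}^{-d/2}-X_{+-}^{-d/2}$. The estimate there gives
\begin{equation}
|X_{--}^{-d/2}-X_{+-}^{-d/2}|\lesssim_d \frac{r\br{r}\cos\br{\tht}_1}{X_{--}^{d/2}X_{+-}}.
\end{equation}
We then use $|\br{s}-s\cos\br{\phi}_1|\le X_{--}^{1/2}$ and $\br{r}^2\le 4X_{+-}$ to reach
\begin{equation}
|F^r_{n,m}|\lesssim_d r\int_0^\pi\int_0^\pi \frac{\br{r}^{n-1}\br{s}^m\sin^{n-1}\br{\tht}_1\sin^{m-1}\br{\phi}_1}{X_{--}^{(d-1)/2}}\,d\br{\tht}_1\,d\br{\phi}_1 .
\end{equation}
It therefore suffices to show that the double integral is $\lesssim D^{-2}$, which is consistent with its homogeneity of degree $n-1+m-(d-1)=-2$.

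\emph{Step 2 (the $\br{\tht}_1$-integral).} Set $Y:=D^2+2s\br{s}(1-\cos\br{\phi}_1)$. The inner integral equals $(r\br{r})^{-(d-1)/2}\,\calA_{0,n-1,(d-1)/2}(Y/(r\br{r}))$, where $\gmm=(d-1)/2>n/2$. By Lemma \ref{lem:calAdecay}, $\calA_{0,n-1,(d-1)/2}(\tau)\lesssim \min\{\tau^{-(m+1)/2},\tau^{-(d-1)/2}\}$, and hence $\calA_{0,n-1,(d-1)/2}(\tau)\lesssim \tau^{-p}$ for every intermediate exponent $p$. The key choice, which I expect to be the only delicate point, is to use the intermediate exponent $p=(m+2)/2\le (d-1)/2$ (valid since $n\ge1$) rather than the endpoint $(m+1)/2$. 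The endpoint loses a factor $(r\br{r})^{-1/2}$ and would only reproduce the $1/D$ bound.

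Split $\br{r}^{n-1}\lesssim |r-\br{r}|^{n-1}+(r\br{r})^{(n-1)/2}$ using \eqref{eq:basic}.
\begin{itemize}
\item For the part $(r\br{r})^{(n-1)/2}$, use $p=(m+2)/2$. The powers of $r\br{r}$ cancel exactly, since $(r\br{r})^{(n-1)/2-(d-1)/2+(m+2)/2}=1$, leaving $Y^{-(m+2)/2}$.
\item For the part $|r-\br{r}|^{n-1}$, use $p=(d-1)/2$. This gives $D^{n-1}Y^{-(d-1)/2}\le Y^{-(m+2)/2}$, because $D^2\le Y$.
\end{itemize}

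\emph{Step 3 (the $\br{\phi}_1$-integral).} It remains to bound
\begin{equation}
\int_0^\pi \frac{\br{s}^m\sin^{m-1}\br{\phi}_1}{Y^{(m+2)/2}}\,d\br{\phi}_1=\frac{\br{s}^m}{(s\br{s})^{(m+2)/2}}\,\calA_{0,m-1,(m+2)/2}\Big(\frac{D^2}{s\br{s}}\Big).
\end{equation}
Lemma \ref{lem:calAdecay} gives the decay $\min\{\tau^{-1},\tau^{-(m+2)/2}\}$. Split $\br{s}^m\lesssim|s-\br{s}|^m+(s\br{s})^{m/2}$ using \eqref{eq:basic}.
\begin{itemize}
\item The part $(s\br{s})^{m/2}$ with $\tau^{-1}$ yields $(s\br{s})^{-1}\cdot s\br{s}/D^2=D^{-2}$.
\item The part $|s-\br{s}|^m$ with $\tau^{-(m+2)/2}$ yields $D^m\cdot D^{-(m+2)}=D^{-2}$.
\end{itemize}
This proves \eqref{eq:Frweight}. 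For \eqref{eq:Fsweight}, repeat the argument with the cancellation in $\br{\phi}_1$ (comparing $X_{--}$ with $X_{-+}$), the inequality $\br{s}^2\lesssim X_{-+}$, and the $\calA$-integrals in the other order.
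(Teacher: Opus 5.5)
Your proof is correct, but it extracts the factor $r$ differently from the paper.

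\textbf{The paper's route.} The paper never symmetrizes in $\br{\tht}_1$. It starts from the representation \eqref{eq:Fnmr} in terms of $\calA_{1,n-1,d/2}$ and simply rewrites $\br{r}^{n}(r\br{r})^{-d/2}=r\,\br{r}^{n+1}(r\br{r})^{-d/2-1}$. It then splits $\br{r}^{n+1}$ by \eqref{eq:basic}. For the $|r-\br{r}|^{n+1}$ piece it uses the large-$\tau$ rate $\tau^{-(d/2+1)}$ of $\calA_{1,n-1,d/2}$. For the $(r\br{r})^{(n+1)/2}$ piece it uses the intermediate rate $\tau^{-(m+3)/2}$. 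Throughout, it keeps $|\br{s}-s\cos\br{\phi}_1|\le Y^{1/2}$ (your $Y$) inside the integral. This lands on the same integral $r\int_0^\pi \br{s}^m\sin^{m-1}\br{\phi}_1\,Y^{-(m/2+1)}\,d\br{\phi}_1$ as your Step 3, which the paper then handles exactly as you do.

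\textbf{Where the cancellation lives.} In the paper, the cancellation from the oddness of $\cos\br{\tht}_1$ about $\pi/2$ is hidden in the extra decay $\tau^{-\alp}$, $\alp=1$, of Lemma \ref{lem:calAdecay}, i.e.\ in the vanishing of $\int_0^\pi\cos\tht\sin^{\bt}\tht\,d\tht$. You instead make it explicit and pointwise, via the reflection $\br{\tht}_1\mapsto\pi-\br{\tht}_1$ already carried out in Lemma \ref{lem:lwplem}. Your intermediate exponent $(m+2)/2$ for $\calA_{0,n-1,(d-1)/2}$ then plays the role of the paper's $(m+3)/2$ for $\calA_{1,n-1,d/2}$.

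\textbf{What each route buys.} In your route, after Step 1 only nonnegative kernels and the $\alp=0$ case of Lemma \ref{lem:calAdecay} are needed. It also shows that \eqref{eq:Frweight} and \eqref{eq:uroverrwoverrLd,1} rest on one and the same bound of $F^r_{n,m}$ by $r$ times a positive kernel. The paper's route stays entirely within the $\calA$-calculus, parallel to Lemma \ref{lem:Frslow}, and needs no half-range splitting.

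A small remark: the inequality $\br{r}^2\le 4X_{+-}$ that you import from Lemma \ref{lem:lwplem} is trivially true on $[0,\pi/2]$. There $\cos\br{\tht}_1\ge 0$ gives $X_{+-}\ge (r-\br{r})^2+2r\br{r}=r^2+\br{r}^2$, so you need not rely on the case analysis given there.
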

	
	\begin{proof}[Proof of Lemma \ref{lem:FrFsweight}]
		We slightly change the proof of Lemma \ref{lem:Frslow}. To begin with, we use the elementary inequality \eqref{eq:basic};
		\begin{equation}
			r, \br{r}\lesssim |r-\br{r}|+(r\br{r})^{1/2},\quad s, \br{s}\lesssim |s-\br{s}|+(s\br{s})^{1/2},
		\end{equation}
		to get
		\begin{equation}
			\begin{aligned}
				&|F_{n,m}^{r}(r,s,\br{r},\br{s})|\lesssim 
				\int_{0}^{\pi}\frac{r\br{r}^{n+1}\br{s}^{m}\sin^{m-1}\br{\phi}_{1}|\br{s}-s\cos\br{\phi_{1}}|}{r^{d/2+1}\br{r}^{d/2+1}}\calA_{1,n-1,d/2}\left(\frac{D^{2}+2s\br{s}(1-\cos\br{\phi}_{1})}{r\br{r}}\right)d\br{\phi}_{1}\\
				\;&\lesssim 
				\int_{0}^{\pi}\frac{r[|r-\br{r}|^{n+1}+(r\br{r})^{(n+1)/2}]\br{s}^{m}\sin^{m-1}\br{\phi}_{1}|\br{s}-s\cos\br{\phi_{1}}|}{r^{d/2+1}\br{r}^{d/2+1}}\calA_{1,n-1,d/2}\left(\frac{D^{2}+2s\br{s}(1-\cos\br{\phi}_{1})}{r\br{r}}\right)d\br{\phi}_{1}.
			\end{aligned}
		\end{equation}
		In the above, note that we left the term $r$ on the numerator unchanged. Then we use the decay
		\begin{equation}
			\calA_{1,n-1,d/2}(\tau)\lesssim_{n,m} \min\{ \tau^{-(m/2+1)},\tau^{-(d/2+1)}\}.
		\end{equation}
		This gives us
		\begin{equation}
			\begin{aligned}
				&|F_{n,m}^{r}(r,s,\br{r},\br{s})|\\
				\;&\lesssim 
				\int_{0}^{\pi}\frac{r\br{s}^{m}\sin^{m-1}\br{\phi}_{1}|\br{s}-s\cos\br{\phi_{1}}|}{r^{d/2+1}\br{r}^{d/2+1}}\\
				&\quad\cdot\left[|r-\br{r}|^{n+1}\left( \frac{r\br{r}}{D^{2}+2s\br{s}(1-\cos\br{\phi}_{1})}\right)^{d/2+1}+(r\br{r})^{(n+1)/2} \left( \frac{r\br{r}}{D^{2}+2s\br{s}(1-\cos\br{\phi}_{1})}\right)^{(m+3)/2}\right]d\br{\phi}_{1}\\
				\quad&\lesssim \int_{0}^{\pi}\frac{
					r\br{s}^{m}\sin^{m-1}\br{\phi}_{1}}{[D^{2}+2s\br{s}(1-\cos\br{\phi}_{1})]^{m/2+1}}d\br{\phi}_{1}=\frac{r\br{s}^{m}}{(s\br{s})^{m/2+1}}\calA_{0,m-1,m/2+1}\left(\frac{D^{2}}{s\br{s}}\right).
			\end{aligned}
		\end{equation}
		Note that we used the decay rate $-(m+3)/2$ in the computation above. 
		Then we use the decay
		\begin{equation}
			\calA_{0,m-1,m/2+1}(\tau)\lesssim_{m} \min\{ \tau^{-(m/2+1)},\tau^{-1}\},
		\end{equation}
		which leads to
		\begin{equation}
			\begin{aligned}
				&|F_{n,m}^{r}(r,s,\br{r},\br{s})|\lesssim 
				\frac{r[|s-\br{s}|^{m}+(s\br{s})^{m/2}]}{(s\br{s})^{m/2+1}}\calA_{0,m-1,m/2+1}\left(\frac{D^{2}}{s\br{s}}\right)
				\\
				\quad&\lesssim \frac{
					r}{(s\br{s})^{m/2+1}}\left[|s-\br{s}|^{m}\left(\frac{s\br{s}}{D^{2}}\right)^{m/2+1}+(s\br{s})^{m/2}\left(\frac{s\br{s}}{D^{2}}\right)\right]\lesssim \frac{r
				}{D^{2}}.
			\end{aligned}
		\end{equation}
		In a similar way, we use the elementary inequality to get
		\begin{equation}
			\begin{aligned}
				&|F_{n,m}^{s}(r,s,\br{r},\br{s})|\lesssim 
				\int_{0}^{\pi}\frac{\br{r}^{n}s\br{s}^{m+1}\sin^{n-1}\br{\tht}_{1}|\br{r}-r\cos\br{\tht_{1}}|}{s^{d/2+1}\br{s}^{d/2+1}}\calA_{1,m-1,d/2}\left(\frac{D^{2}+2r\br{r}(1-\cos\br{\tht}_{1})}{s\br{s}}\right)d\br{\tht}_{1}\\
				\;&\lesssim 
				\int_{0}^{\pi}\frac{\br{r}^{n}s[|s-\br{s}|^{m+1}+(s\br{s})^{(m+1)/2}]\sin^{n-1}\br{\tht}_{1}|\br{r}-r\cos\br{\tht_{1}}|}{s^{d/2+1}\br{s}^{d/2+1}}\calA_{1,m-1,d/2}\left(\frac{D^{2}+2r\br{r}(1-\cos\br{\tht}_{1})}{s\br{s}}\right)d\br{\tht}_{1},
			\end{aligned}
		\end{equation}
		where the term$ s $ in the numerator above stays the same. Then using the decay
		\begin{equation}
			\calA_{1,m-1,d/2}(\tau)\lesssim_{n,m} \min\{ \tau^{-(n/2+1)},\tau^{-(d/2+1)}\},
		\end{equation}
		we have
		\begin{equation}
			\begin{aligned}
				&|F_{n,m}^{s}(r,s,\br{r},\br{s})|\\
				\;&\lesssim 
				\int_{0}^{\pi}\frac{\br{r}^{n}s\sin^{n-1}\br{\tht}_{1}|\br{r}-r\cos\br{\tht_{1}}|}{s^{d/2+1}\br{s}^{d/2+1}}\\
				&\quad\cdot\left[|s-\br{s}|^{m+1}\left( \frac{s\br{s}}{D^{2}+2r\br{r}(1-\cos\br{\tht}_{1})}\right)^{d/2+1}+(s\br{s})^{(m+1)/2} \left( \frac{s\br{s}}{D^{2}+2r\br{r}(1-\cos\br{\tht}_{1})}\right)^{(n+3)/2}\right]d\br{\tht}_{1}\\
				\;&\lesssim \int_{0}^{\pi}\frac{
					\br{r}^{n}s\sin^{n-1}\br{\tht}_{1}}{[D^{2}+2r\br{r}(1-\cos\br{\tht}_{1})]^{n/2+1}}d\br{\tht}_{1}=\frac{\br{r}^{n}s}{(r\br{r})^{n/2+1}}\calA_{0,n-1,n/2+1}\left(\frac{D^{2}}{r\br{r}}\right).
			\end{aligned}
		\end{equation}
		In the above, we used the decay rate $-(n+3)/2$. Then we use the decay
		\begin{equation}
			\calA_{0,n-1,n/2+1}(\tau)\lesssim_{n} \min\{ \tau^{-(n/2+1)},\tau^{-1}\},
		\end{equation}
		which gives us
		\begin{equation}
			\begin{aligned}
				&|F_{n,m}^{s}(r,s,\br{r},\br{s})|\lesssim 
				\frac{[|r-\br{r}|^{n}+(r\br{r})^{n/2}]s}{(r\br{r})^{n/2+1}}\calA_{0,n-1,n/2+1}\left(\frac{D^{2}}{r\br{r}}\right)
				\\
				\quad&\lesssim \frac{
					s}{(r\br{r})^{n/2+1}}\left[|r-\br{r}|^{n}\left(\frac{r\br{r}}{D^{2}}\right)^{n/2+1}+(r\br{r})^{n/2}\left(\frac{r\br{r}}{D^{2}}\right)\right]\lesssim \frac{s
				}{D^{2}}.
			\end{aligned}
		\end{equation}
	\end{proof}
	Now we introduce a proposition about time-independent global bound of the velocity $\bfv$. This is going to be one of the main idea in the proof of Theorem \ref{thm:GWP}.
	\begin{prop}\label{prop:FStypeu}
		We have
		\begin{equation}\label{eq:FStype}
			\nrm{\bfv}_{L^{\ift}(\bbR^{d})}\lesssim \left[\left\| \frac{r^{m}w}{s^{m}}\right\|_{L^{1}(\bbR^{d})}+\left\| \frac{s^{n}w}{r^{n}}\right\|_{L^{1}(\bbR^{d})}\right]^{1/2}\left\| \frac{w}{r^{n}s^{m}}\right\|_{L^{\ift}(\bbR^{d})}^{1/2}.
		\end{equation}
	\end{prop}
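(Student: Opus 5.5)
The plan is to reduce \eqref{eq:FStype} to an estimate at points on the unit circle in the first quadrant, and then split the Biot--Savart integral from Proposition \ref{prop:bsformula} into a region near the evaluation point and a region away from it. The estimate is invariant under the dilations $w(\cdot)\mapsto \lambda^{a}w(\lambda\,\cdot)$. Indeed, $\bfv$ scales like $\lambda^{a-1}$, $\nrm{r^{m}w/s^{m}}_{L^{1}}$ and $\nrm{s^{n}w/r^{n}}_{L^{1}}$ scale like $\lambda^{a-d}$, and $\nrm{w/(r^{n}s^{m})}_{L^{\ift}}$ scales like $\lambda^{a+n+m}$. Since $d-n-m=2$, the right-hand side of \eqref{eq:FStype} also scales like $\lambda^{a-1}$. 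Hence it suffices to bound $|u^{r}(r,s)|$ and $|u^{s}(r,s)|$ at a fixed point with $r^{2}+s^{2}=1$; this is the reduction \eqref{eq:FStypescale} from the introduction. I treat $u^{r}$; $u^{s}$ is symmetric.

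Throughout I will write
\[
L:=\left\|\frac{w}{r^{n}s^{m}}\right\|_{L^{\ift}},\qquad M:=\left\| \frac{r^{m}w}{s^{m}}\right\|_{L^{1}}+\left\| \frac{s^{n}w}{r^{n}}\right\|_{L^{1}}\simeq \iint_{\Pi}|w(\br r,\br s)|\,(\br r^{2}+\br s^{2})^{(n+m)/2}\,d\br r\,d\br s .
\]
The equivalence holds because the $\bbR^{d}$ measure is $\br r^{n}\br s^{m}\,d\br r\,d\br s$ up to constants. I also use the pointwise bound $|w|\le L\,\br r^{n}\br s^{m}$.

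\emph{Case $L\le 16M$.} On the near region $\{D<1/4\}$, Lemma \ref{lem:Frslow} gives $|F^{r}_{n,m}|\lesssim 1/D$. There $\br r,\br s\le 2$, so $|w|\lesssim L$, and the contribution is $\lesssim L\int_{D<1/4}D^{-1}\lesssim L\lesssim \sqrt{LM}$. On the far region $\{D\ge 1/4\}$ I use Cauchy--Schwarz with the splitting $|w|=|w|^{1/2}|w|^{1/2}\le |w|^{1/2}(L\br r^{n}\br s^{m})^{1/2}$:
\[
\iint_{D\ge1/4}|F^{r}_{n,m}||w|\le L^{1/2}M^{1/2}\Big(\iint_{D\ge 1/4}|F^{r}_{n,m}|^{2}\,\frac{\br r^{n}\br s^{m}}{(\br r^{2}+\br s^{2})^{(n+m)/2}}\,d\br r\,d\br s\Big)^{1/2}.
\]
It remains to check that the last integral is bounded by an absolute constant. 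The weight is $\le 1$. For $D\ge 1/4$ we have $|F^{r}_{n,m}|\lesssim\min\{1/D,\,r/D^{2}\}\lesssim \min\{4,\,D^{-2}\}$ by Lemmas \ref{lem:Frslow} and \ref{lem:FrFsweight}, using $r\le1$. Since $D^{-4}$ is integrable at infinity in two dimensions, the integral is finite.

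\emph{Case $L>16M$.} Set $\delta:=\sqrt{M/L}<1/4$. The near region $\{D<\delta\}$ contributes $\lesssim L\delta=\sqrt{LM}$, exactly as above. The intermediate region $\{D\ge\delta,\ \br r^{2}+\br s^{2}\ge 1/4\}$ has weight $(\br r^{2}+\br s^{2})^{(n+m)/2}\gtrsim 1$. Using $|F^{r}_{n,m}|\lesssim 1/D\le 1/\delta$, its contribution is $\lesssim M/\delta=\sqrt{LM}$. On the set $\{D\ge\delta,\ \br r^{2}+\br s^{2}<1/4\}$ we automatically have $D\ge 1/2$. There I repeat the Cauchy--Schwarz argument of the first case, with $|F^{r}_{n,m}|\lesssim 1$ on a bounded set, which again gives $\lesssim\sqrt{LM}$.

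I expect the main obstacle to be this region close to the axes and the origin but away from the evaluation point. Plain $L^{1}$ control by $M$ is too weak there, because the weight $(\br r^{2}+\br s^{2})^{(n+m)/2}$ degenerates; a crude bound would only give $M$ or $L$ rather than $\sqrt{LM}$. The Cauchy--Schwarz interpolation between $|w|\le L\br r^{n}\br s^{m}$ and the moment $M$ is designed to handle this. It needs the square-integrability of $F^{r}_{n,m}$ against $\br r^{n}\br s^{m}(\br r^{2}+\br s^{2})^{-(n+m)/2}$ away from the evaluation point. If the bounds from Lemmas \ref{lem:Frslow} and \ref{lem:FrFsweight} are not sharp enough near $\br r=0$ or $\br s=0$, I would go back to \eqref{eq:Fnmr}. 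There $X_{--}\gtrsim 1$ and the explicit factor $\br r^{n}\br s^{m}$ give $|F^{r}_{n,m}|\lesssim \br r^{n}\br s^{m}$, which is more than enough.
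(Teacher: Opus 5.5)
Your proof is correct and is essentially the paper's argument. It reduces to the unit circle by scaling and splits near/far around the evaluation point, using the $1/D$ bound of Lemma \ref{lem:Frslow} nearby; in the far field it applies Cauchy--Schwarz, pairing $|w|\le L\,\br{r}^{n}\br{s}^{m}$ with the moment $M$ against the square-integrable decay $r/D^{2}$ from Lemma \ref{lem:FrFsweight}. The only difference is bookkeeping: the paper splits once at $D=1/4$ and treats the near ball by the interpolation $\iint_{D<1/4}|w|/D\lesssim\nrm{w}_{L^{1}}^{1/2}\nrm{w}_{L^{\ift}}^{1/2}$ (the weights are $\sim 1$ there), whereas you inline that interpolation through the case split $L\le 16M$ versus $L>16M$, which incidentally shows that only Lemma \ref{lem:Frslow} is needed when $L>16M$.
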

	
	\begin{proof}[Proof of Proposition \ref{prop:FStypeu}]
		It suffices to show that for any $r,s\geq0$ satisfying $r^{2}+s^{2}=1$, we have
		\begin{equation}\label{eq:FStypescale}
			|u^{r}(r,s)|, |u^{s}(r,s)|\lesssim \left[\left\| \frac{r^{m}w}{s^{m}}\right\|_{L^{1}(\bbR^{d})}+\left\| \frac{s^{n}w}{r^{n}}\right\|_{L^{1}(\bbR^{d})}\right]^{1/2}\left\| \frac{w}{r^{n}s^{m}}\right\|_{L^{\ift}(\bbR^{d})}^{1/2}.
		\end{equation}
		We start by splitting the integral range of $u^{r}$ and $u^{s}$ into
		\begin{equation}
			\begin{aligned}
				u^{r}(r,s)&=\left[\iint_{B_{1/4}(r,s)^{c}}+\iint_{B_{1/4}(r,s)}\right]F_{n,m}^{r}(r,s,\br{r},\br{s})w(\br{r},\br{s})d\br{r}d\br{s}=:(I_{1})+(I_{2}),\\
				u^{s}(r,s)&=\left[\iint_{B_{1/4}(r,s)^{c}}+\iint_{B_{1/4}(r,s)}\right]F_{n,m}^{s}(r,s,\br{r},\br{s})w(\br{r},\br{s})d\br{r}d\br{s}=:(I_{1}')+(I_{2}'),
			\end{aligned}
		\end{equation}
		where $B_{1/4}(r,s):=\{ (\br{r},\br{s})\in \Pi : D<1/4\}$ and $ B_{1/4}(r,s)^{c}:=\Pi\setminus B_{1/4}(r,s)=\{ (\br{r},\br{s})\in \Pi : D\geq 1/4\} $. 
		On $ B_{1/4}(r,s)^{c} $, we use \eqref{eq:Frweight} and \eqref{eq:Fsweight} from Lemma \ref{lem:FrFsweight}, and the estimates $r^{2}+s^{2}\leq1$, $D\gtrsim 1$, and
		$$ \br{r}^{2}+\br{s}^{2}\leq D^{2}+[r^{2}+s^{2}] \lesssim D^{2}+1\lesssim D^{2}. $$
		Then we have
		\begin{equation}
			\begin{aligned}
				|(I_{1})|&\lesssim \iint_{B_{1/4}(r,s)^{c}}\frac{r}{D^{2}}
				|w(\br{r},\br{s})|d\br{r}d\br{s}\lesssim \iint_{B_{1/4}(r,s)^{c}}\left(\frac{\br{r}^{n}\br{s}^{m}}{\br{r}^{n}\br{s}^{m}}\right)^{1/2}\frac{|w(\br{r},\br{s})|}{D^{2}}d\br{r}d\br{s}\\
				&\lesssim \iint_{B_{1/4}(r,s)^{c}}\frac{(\br{r}^{n}\br{s}^{m}|w(\br{r},\br{s})|)^{1/2}}{D^{2}}d\br{r}d\br{s}\left\| \frac{w}{r^{n}s^{m}}\right\|_{L^{\ift}(B_{1/4}(r,s)^{c})}^{1/2}\\
				&\lesssim \left(\iint_{B_{1/4}(r,s)^{c}}\frac{1}{D^{4}}d\br{r}d\br{s}\right)^{1/2}\left(\iint_{B_{1/4}(r,s)^{c}}\br{r}^{n}\br{s}^{m}|w(\br{r},\br{s})|d\br{r}d\br{s}\right)^{1/2}\left\| \frac{w}{r^{n}s^{m}}\right\|_{L^{\ift}(B_{1/4}(r,s)^{c})}^{1/2}.
			\end{aligned}
		\end{equation}
		Then we use the Young's inequality $\br{r}^{n}\br{s}^{m}\leq (n\br{r}^{n+m}+m\br{s}^{n+m})/(n+m)$ to get
		\begin{equation}
			\begin{aligned}
				|(I_{1})|&\lesssim \left(\iint_{B_{1/4}(r,s)^{c}}(\br{r}^{n+m}+\br{s}^{n+m})|w(\br{r},\br{s})|d\br{r}d\br{s}\right)^{1/2}\left\| \frac{w}{r^{n}s^{m}}\right\|_{L^{\ift}(B_{1/4}(r,s)^{c})}^{1/2}\\
				&\lesssim \left[\left\| r^{n+m}w\right\|_{L^{1}(B_{1/4}(r,s)^{c})}+\left\| s^{n+m}w\right\|_{L^{1}(B_{1/4}(r,s)^{c})}\right]^{1/2} \left\| \frac{w}{r^{n}s^{m}}\right\|_{L^{\ift}(B_{1/4}(r,s)^{c})}^{1/2}\\
				&\lesssim 
				\left[\left\| \frac{r^{m}w}{s^{m}}\right\|_{L^{1}(\bbR^{d})}+\left\| \frac{s^{n}w}{r^{n}}\right\|_{L^{1}(\bbR^{d})}\right]^{1/2}\left\| \frac{w}{r^{n}s^{m}}\right\|_{L^{\ift}(\bbR^{d})}^{1/2}.
			\end{aligned}
		\end{equation}
		Similarly, we have
		\begin{equation}
			\begin{aligned}
				|(I_{1}')|&\lesssim \iint_{B_{1/4}(r,s)^{c}}\frac{s}{D^{2}}|w(\br{r},\br{s})|d\br{r}d\br{s}\lesssim \iint_{B_{1/4}(r,s)^{c}}\left(\frac{\br{r}^{n}\br{s}^{m}}{\br{r}^{n}\br{s}^{m}}\right)^{1/2}\frac{|w(\br{r},\br{s})|}{D^{2}}d\br{r}d\br{s}\\
				&\lesssim \left[\left\| \frac{r^{m}w}{s^{m}}\right\|_{L^{1}(\bbR^{d})}+\left\| \frac{s^{n}w}{r^{n}}\right\|_{L^{1}(\bbR^{d})}\right]^{1/2}\left\| \frac{w}{r^{n}s^{m}}\right\|_{L^{\ift}(\bbR^{d})}^{1/2}.
			\end{aligned}
		\end{equation}
		On $ B_{1/4}(r,s) $, 
		we can use 
		\eqref{eq:Frslow} from Lemma \ref{lem:Frslow} and the estimates $ \br{r}^{n+m}+\br{s}^{n+m}\sim 1 $ and $\br{r}^{n}\br{s}^{m}\lesssim 1$ to get
		\begin{equation}
			\begin{aligned}
				|(I_{2})|, |(I_{2}')|&
				\lesssim \iint_{B_{1/4}(r,s)}\frac{|w(\br{r},\br{s})|}{D}d\br{r}d\br{s}\lesssim \nrm{w}_{L^{1}(B_{1/4}(r,s))}^{1/2}\nrm{w}_{L^{\ift}(B_{1/4}(r,s))}^{1/2}\\
				&\lesssim \nrm{(r^{n+m}+s^{n+m})w}_{L^{1}(B_{1/4}(r,s))}^{1/2}\left\| \frac{w}{r^{n}s^{m}}\right\|_{L^{\ift}(B_{1/4}(r,s))}^{1/2}\\
				&\lesssim \left[\left\| \frac{r^{m}w}{s^{m}}\right\|_{L^{1}(\bbR^{d})}+\left\| \frac{s^{n}w}{r^{n}}\right\|_{L^{1}(\bbR^{d})}\right]^{1/2}\left\| \frac{w}{r^{n}s^{m}}\right\|_{L^{\ift}(\bbR^{d})}^{1/2}.
			\end{aligned}
		\end{equation}
	\end{proof}

	\subsection{Radial moments}
	In this subsection, we consider the radial moments and their behavior in time. 
	Let $k\geq 1$ be an integer. Then we respectively define $r$-radial moment and $s$-radial moment as
	\begin{equation}
		P_{k}^{r}(t):=\iint_{\Pi}r^{k}|w(t,r,s)|drds,\quad P_{k}^{s}(t):=\iint_{\Pi}s^{k}|w(t,r,s)|drds.
	\end{equation}
	Also, we define
	\begin{equation}
		L_{k}(t):=P_{k}^{r}(t)+P_{k}^{s}(t).
	\end{equation}
	We recall the initial data assumptions as $u_0\in L^2(\mathbb{R}^d)$ and 
	\[ 
	w_{0}\in (L^{d,1}\cap L^{\ift})(\bbR^{d}), \qquad \frac{w_0}{r^ns^m}\in (L^1\cap L^{\infty}) (\mathbb{R}^d),
	\] 
	together with
	\[
	\frac{w_0}{r^n}, \, \frac{w_0}{s^m} \in L^{\infty}(\mathbb{R}^d), \qquad \frac{r^m w_0}{s^m}, \, \frac{s^n w_0}{r^n} \in L^1 (\mathbb{R}^d).
	\]
	These assumptions imply of those in Theorem \ref{thm:LWP}, and therefore, the local existence of a unique solution to \eqref{eq:vorteq} is guaranteed. 
	
Let $[0,T^{*}_d)$ be the maximal existence interval of the local solution $w$ from Theorem \ref{thm:LWP}. For every finite $T<T^{*}_d$, we have
\begin{align} \label{eq:bkm}
\int_0^{T} \left( \left\| \frac{u^r(t)}{r}  \right\|_{L^{\infty}(\mathbb{R}^d)} + \left\| \frac{u^s(t)}{s}  \right\|_{L^{\infty}(\mathbb{R}^d)} \right) dt < +\infty.
\end{align}
In particular, since
\begin{equation}
\left(\partial_t+u^r\partial_r+u^s\partial_s\right) \frac{w}{r^{n-k}s^{m}} = k\frac{u^r}{r}\frac{w}{r^{n-k}s^{m}},
\end{equation}
and \[ P_k^r(t)\simeq_d \left\| \frac{w}{r^{n-k}s^m} \right\|_{L^1(\mathbb R^d)}, \]
Gronwall's inequality yields
\begin{align} \label{eq:pr_fin} 
P_k^r(t) \lesssim_d P_k^r(0) \exp\left( k\int_0^t \left\| \frac{u^r(\tau)}{r} \right\|_{L^\infty(\mathbb R^d)} d\tau \right), \qquad 0\le t<T_d^*. 
\end{align}
Consequently, the finiteness of the moment $P_k^r(t)$ propagates throughout the interval $[0,T^*_d)$. A similar conclusion can be said for the finiteness of $P_k^s(t)$ on the interval $[0,T^*_d)$.
		
	\begin{lem} \label{lem:rsder}
	For $k\ge 1$, assuming the finiteness of the initial moment $P_k^r(0)<+\infty$, we have
		\begin{align} \label{eq:rder}
			\frac{d}{dt}P_{k}^{r}(t)&\leq k\iint_{\Pi}r^{k-1}|u^{r}(t,r,s)||w(t,r,s)|drds,  \qquad 0\le t<T^*_d.
		\end{align}
	Similarly, assuming $P_k^s(0)<+\infty$, we have 	
		\begin{align} \label{eq:sder}
		\frac{d}{dt}P_{k}^{s}(t)&\leq k\iint_{\Pi}s^{k-1}|u^{s}(t,r,s)||w(t,r,s)|drds, \qquad 0\le t<T^*_d.
		\end{align}
	\end{lem}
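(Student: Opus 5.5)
The plan is to write $P_k^r$ as an integral over $\bbR^{d}$ of a transported quantity and differentiate along the flow. Since $d\bfx\simeq r^{n}s^{m}\,dr\,ds$, we have
\begin{equation}
P_k^r(t)=\iint_{\Pi} r^{k}|w|\,dr\,ds \simeq_d \int_{\bbR^{d}} r^{k}\frac{|w|}{r^{n}s^{m}}\,d\bfx .
\end{equation}
The relative vorticity $f:=w/(r^{n}s^{m})$ is transported by the divergence-free field $\bfv=u^{r}\bfe_{r}+u^{s}\bfe_{s}$ by \eqref{eq:vorticityeq}, so $|f|$ is transported as well. Hence, formally,
\begin{equation}
\frac{d}{dt}\int_{\bbR^{d}} r^{k}|f|\,d\bfx=\int_{\bbR^{d}} |f|\,(\bfv\cdot\nb)(r^{k})\,d\bfx=k\int_{\bbR^{d}} r^{k-1}u^{r}|f|\,d\bfx .
\end{equation}
Undoing the change of variables, the right-hand side equals $k\iint_{\Pi} r^{k-1}u^{r}|w|\,dr\,ds$, which is bounded by the right side of \eqref{eq:rder}. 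An equivalent route works in Lagrangian form: $P_k^r(t)\simeq\int r_0$-measure-preserving pushforward, i.e. $P_k^r(t)=c\int_{\bbR^{d}} [\Phi^{r}_{t}(\bfx)]^{k}|f_0(\bfx)|\,d\bfx$ by volume preservation of the flow in $\bbR^{d}$. Differentiating in $t$ then gives $k\int (\Phi_t^r)^{k-1}u^{r}(t,\Phi_t)|f_0|$, and changing variables back yields the same identity. I would present the Lagrangian version, since it avoids differentiating $|f|$.

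The main obstacle is justifying the differentiation under the integral, including at $t=0$ and up to $T_d^*$ (interpreted as an integral inequality if needed). I would proceed in three steps.
\begin{itemize}
\item On $[0,T]$ with $T<T_d^*$, the velocity is bounded by \eqref{eq:uwLd,1}, since $\|w\|_{L^{d,1}}$ stays bounded there. Consequently $|\Phi_t^r-r|\le Ct$, and $(\Phi_t^r)^{k}\lesssim r^{k}+C$.
\item The integrand $(\Phi_t^r)^{k-1}|u^r||f_0|$ is then dominated by $(r^{k-1}+1)|f_0|$. This is integrable: $\int r^{k-1}|f_0|\,d\bfx\simeq P^r_{k-1}(0)$ is controlled by $P_k^r(0)+\|f_0\|_{L^{1}(\bbR^{d})}$, using $f_0\in L^{1}$ on $\{r\le1\}$ and $r^{k-1}\le r^{k}$ on $\{r\ge1\}$. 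Here finiteness of $P_k^r(0)$ enters.
\item Dominated convergence then permits differentiating under the integral (or writing the fundamental theorem of calculus in integrated form), which gives \eqref{eq:rder} for a.e. $t$ and, by continuity of the right side, for all $t$.
\end{itemize}
If one prefers to avoid regularity of the flow, one can mollify the data, apply the smooth argument, and pass to the limit.

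Finally, \eqref{eq:sder} is proved identically, with $s$, $\Phi_t^s$ and $u^s$ in place of $r$, $\Phi_t^r$ and $u^r$, using $\bfv\cdot\nb(s^k)=ks^{k-1}u^s$.
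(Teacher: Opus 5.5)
Your argument is correct, but it takes a different route from the paper.

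\textbf{The paper's route.} The paper works in Eulerian form. It uses that $w$ obeys $\rd_t w+\nb_{(r,s)}\cdot(\bfv w)=0$ on $\Pi$, regularizes $|w|$ by $\sqrt{w^{2}+\veps^{2}}-\veps$, and integrates by parts. Using \eqref{eq:divergencefree}, it rewrites the resulting error term as an integral of $\varphi_{\veps}(w)\,\nb_{(r,s)}\cdot(r^{k}\bfv)$. It then removes this term as $\veps\to0$ by dominated convergence, with dominant $r^{k}|w|\,(\nrm{u^{r}/r}_{L^{\ift}}+\nrm{u^{s}/s}_{L^{\ift}})$, which is integrable by \eqref{eq:bkm} and \eqref{eq:pr_fin}. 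What comes out is only the integrated inequality.

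\textbf{Your route.} You push the integral onto the fixed measure $|f_0|\,d\bfx$ through the volume-preserving flow in $\bbR^{d}$. This avoids the non-smoothness of $|w|$, the integrations by parts, and any boundary terms at $r=0$ or $s=0$. It also gives the sharper identity $\frac{d}{dt}P_k^r=k\iint_{\Pi}r^{k-1}u^{r}|w|\,dr\,ds$ with signed $u^{r}$.

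\textbf{What it costs.} You rely on two things the paper's proof does not need.
\begin{itemize}
\item The existence of a volume-preserving flow and the Lagrangian representation $f(t,\Phi_t(\bfx))=f_0(\bfx)$ for the Yudovich-type solution. This is standard, and the paper itself uses it in the proof of Theorem \ref{thm:GWP}.
\item For your dominant $(r^{k-1}+1)|f_0|$, the standing assumption $w_0/(r^{n}s^{m})\in L^{1}(\bbR^{d})$, not just $P_k^r(0)<\ift$. If you want only the stated hypothesis, bound $(\Phi_t^r)^{k-1}|u^{r}|\le(\Phi_t^r)^{k}\nrm{u^{r}(t)/r}_{L^{\ift}}$ and $\Phi_t^r\le r\exp(\int_0^t\nrm{u^{r}(\tau)/r}_{L^{\ift}}d\tau)$. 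This gives the dominant $C\,\nrm{u^{r}(t)/r}_{L^{\ift}}\,r^{k}|f_0|$, which is integrable on $[0,T]\times\bbR^{d}$ by \eqref{eq:bkm}.
\end{itemize}

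\textbf{One unjustified step.} Passing from a.e.\ $t$ to every $t$ ``by continuity of the right side'' needs time-continuity of $\bfv$ along trajectories, which you do not justify. This does not matter in the end. The integrated form you get from the fundamental theorem of calculus and Fubini is exactly what the paper proves, and it is all that Lemmas \ref{lem:ddtPd-2} and \ref{lem:Ld-2time} use.
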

	
	\begin{proof}[Proof of Lemma \ref{lem:rsder}]
		We only provide the proof of \eqref{eq:rder}, and \eqref{eq:sder} follows similarly. 
		
		Following the approach of Lemma 3.1 in \cite{EYao25}, we first regularize $P^r_k$ by introducing 
		\[ P^{r,\varepsilon}_{k}(t) := \iint_{\Pi} r^k \left( \sqrt{w^2+\varepsilon^2}-\varepsilon \right) drds, \qquad \varepsilon>0, \] where $t\in [0,T^{*}_d)$.
		By the monotone convergence theorem, it is clear that $P^{r,\varepsilon}_k \nearrow P^r_k$ as $\varepsilon \to 0$. Differentiating $P^{r,\varepsilon}_k$ and applying the vorticity equation along with integration by parts, we obtain
		\begin{align} \label{eq:rder1}
			\begin{split}
				\frac{d}{dt} P^{r,\varepsilon}_k (t) &= \iint_{\Pi} r^{k} \frac{w}{\sqrt{w^2+\varepsilon^2}}\, \partial_tw \, drds = -\iint_{\Pi} r^{k} \frac{w}{\sqrt{w^2+			\varepsilon^2}} \, \nabla_{(r,s)} \cdot (\bfv w) \, drds \\
				&= k \iint_{\Pi} r^{k-1} \frac{u^rw^2}{\sqrt{w^2+\varepsilon^2}} \, drds + \iint_{\Pi} r^{k} w \bfv\cdot \nabla_{(r,s)} \left( \frac{w}{\sqrt{w^2+	\varepsilon^2}} \right) drds \\
				&=  k \iint_{\Pi} r^{k-1} \frac{u^rw^2}{\sqrt{w^2+\varepsilon^2}} \, drds + \iint_{\Pi} r^{k}\bfv \cdot \nabla_{(r,s)} \left(- \frac{\varepsilon^2}{\sqrt{w^2+\varepsilon^2}} \right) drds.		 
			\end{split}
		\end{align}
		Using the incompressibility condition \eqref{eq:divergencefree}, the last integral can be written as
		\begin{align*}
			\iint_{\Pi} r^{k}\bfv \cdot \nabla_{(r,s)} \left(\varepsilon -  \frac{\varepsilon^2}{\sqrt{w^2+\varepsilon^2}} \right) drds &= \iint_{\Pi} \left( \varepsilon - \frac{\varepsilon^2}{\sqrt{w^2+\varepsilon^2}} \right) \nabla_{(r,s)} \cdot (r^k\bfv) \, drds \\
			&= \iint_{\Pi} \left( \varepsilon - \frac{\varepsilon^2}{\sqrt{w^2+\varepsilon^2}}\right) \left( (k-n)r^{k-1}u^r - r^k\frac{m}{s}u^s \right) drds. 
		\end{align*}
		Substituting this into \eqref{eq:rder1} gives
		\begin{align*}
			\frac{d}{dt} P^{r,\varepsilon}_k (t) = k \iint_{\Pi} r^{k-1} \frac{u^rw^2}{\sqrt{w^2+\varepsilon^2}} \, drds + \iint_{\Pi} \left(\varepsilon - \frac{\varepsilon^2}{\sqrt{w^2+\varepsilon^2}}\right) \left( (k-n)r^{k-1}u^r - r^k\frac{m}{s}u^s \right) drds. 
		\end{align*}
		Denoting $\varphi_{\varepsilon}(z):=\varepsilon - \frac{\varepsilon^2}{\sqrt{z^2+\varepsilon^2}}$ and integrating over time $0\le t_1 < t_2<T^{*}$, we obtain
		\begin{align*}
			P^{r,\varepsilon}_k (t_2) - P^{r,\varepsilon}_k (t_1) \le \int_{t_1}^{t_2} \!k \! \iint_{\Pi} r^{k-1} |u^r||w|\, drds dt + \int_{t_1}^{t_2} \! \iint_{\Pi} r^k \varphi_{\varepsilon}(w) \left( \left|\frac{(k-n)u^r}{r}\right| + \left|\frac{mu^s}{s}\right| \right) drds dt.
		\end{align*}
				
		Noting that \[ 0\le \varphi_{\varepsilon}(z)\le \min\{\varepsilon,|z|\}, \] we first observe
		\begin{align*} 
		r^k\varphi_{\varepsilon}(w) \left( \left|\frac{(k-n)u^r}{r}\right| +\left|\frac{mu^s}{s}\right| \right) \leq r^k|w| \left(|k-n| \left\|\frac{u^r}{r}\right\|_{L^\infty(\mathbb R^d)} + m \left\|\frac{u^s}{s}\right\|_{L^\infty(\mathbb R^d)} \right). 
		\end{align*}
		When combined with \eqref{eq:bkm} and \eqref{eq:pr_fin}, the above expression becomes integrable  on $[t_1,t_2]\times\Pi$, uniformly in $\varepsilon$. Moreover, it converges to $0$ as $\varepsilon \to 0$. So, applying the dominated convergence theorem, we obtain  
		\[ 
		\lim_{\varepsilon\to0} \int_{t_1}^{t_2}\!\!\iint_{\Pi} r^k\varphi_{\varepsilon}(w) \left( \left|\frac{(k-n)u^r}{r}\right| + \left|\frac{mu^s}{s}\right| \right) drdsdt=0.
		\]
		
		Finally, using this and the monotone convergence theorem for the left-hand side, we pass to the limit $\varepsilon\to0$ to conclude that 
		\[ 
		P_k^r(t_2)-P_k^r(t_1) \le k\int_{t_1}^{t_2} \iint_{\Pi} r^{k-1}|u^r||w|\,drds\,dt, 
		\] 
		for all $0\le t_1<t_2<T^*_d$. This completes the proof of \eqref{eq:rder}.
	\end{proof}
	
Note that 
\begin{equation} 
P_{d-2}^{r}(t)\simeq_{d}\left\| \frac{r^{m}w}{s^{m}}\right\|_{L^{1}(\bbR^{d})},\quad P_{d-2}^{s}(t)\simeq_{d}\left\| \frac{s^{n}w}{r^{n}}\right\|_{L^{1}(\bbR^{d})}. 
\end{equation} 
Since these quantities are finite initially by our assumptions, Lemma \ref{lem:rsder} can be applied for $k=d-2$.
	\begin{lem}\label{lem:ddtPd-2}
	Under the assumptions of Theorem \ref{thm:GWP}, we have
		\begin{align}
			\frac{d}{dt}P_{d-2}^{r}(t)&\lesssim_{d,w_0} \nrm{u^{r}(t)}_{L^{\ift}(\bbR^{d})}^{(d-4)/(d-2)}P_{d-2}^{r}(t)^{(d-3)/(d-2)},\\
			\frac{d}{dt}P_{d-2}^{s}(t)&\lesssim_{d,w_0} \nrm{u^{s}(t)}_{L^{\ift}(\bbR^{d})}^{(d-4)/(d-2)}P_{d-2}^{s}(t)^{(d-3)/(d-2)},
		\end{align}
	for all $0\le t < T^{*}_d$.
	\end{lem}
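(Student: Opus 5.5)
Starting from Lemma \ref{lem:rsder} with $k=d-2$, we need to bound
\[
\iint_{\Pi} r^{d-3}|u^r||w|\,drds .
\]
The plan is to split this integral at a threshold in the size of the velocity. We pull out $\|u^r\|_{L^\infty}$ only partially, and control the remainder through the kinetic energy. This mimics the axisymmetric argument of \cite{LJ_optimal, SWZ25}, using the energy $\|\bfv\|_{L^2}^2 \simeq \iint (|u^r|^2+|u^s|^2) r^n s^m\,drds$, which is conserved, so $\|\bfv(t)\|_{L^2}=\|\bfv_0\|_{L^2}$. We also use the conserved quantity $\|w/(r^ns^m)\|_{L^\infty}=\|w_0/(r^ns^m)\|_{L^\infty}$ along the flow.

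Concretely, write $|w|=\frac{|w|}{r^ns^m}\,r^ns^m\le M\,r^ns^m$, where $M:=\|w_0/(r^ns^m)\|_{L^\infty}$. Then, for a parameter $R>0$, split $\Pi$ into $\{r\le R\}$ and $\{r>R\}$.

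On $\{r>R\}$ we use $r^{d-3}\le r^{d-2}/R$, which gives
\[
\iint_{r>R} r^{d-3}|u^r||w| \le R^{-1}\|u^r\|_{L^\infty}P^r_{d-2}.
\]

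On $\{r\le R\}$ we use Cauchy--Schwarz:
\[
\iint_{r\le R} r^{d-3}|u^r||w|\le \Big(\iint |u^r|^2 r^ns^m\Big)^{1/2}\Big(\iint_{r\le R} r^{2d-6}\frac{|w|^2}{r^ns^m}\Big)^{1/2}.
\]
The first factor is at most $\|\bfv_0\|_{L^2}$. In the second factor, bound one copy of $|w|/(r^ns^m)$ by $M$ and use $r^{2d-6}\le R^{d-4}r^{d-2}$ (valid since $d\ge4$). The second factor is then $\lesssim (M R^{d-4}P^r_{d-2})^{1/2}$.

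The two pieces give
\[
\frac{d}{dt}P^r_{d-2}\lesssim \frac{\|u^r\|_\infty P}{R}+\|\bfv_0\|_{L^2}M^{1/2}R^{(d-4)/2}P^{1/2},
\]
with $P=P^r_{d-2}$. Optimizing over $R$ balances $R^{-1}\|u^r\|_\infty P = \|\bfv_0\|M^{1/2}R^{(d-4)/2}P^{1/2}$, i.e.
\[
R^{(d-2)/2}=\frac{\|u^r\|_\infty P^{1/2}}{\|\bfv_0\|M^{1/2}} .
\]
Plugging this back in yields
\[
\|u^r\|_\infty^{(d-4)/(d-2)}\,P^{(d-3)/(d-2)}\,\|\bfv_0\|^{2/(d-2)}M^{1/(d-2)},
\]
which matches the form announced in the introduction.

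The exponent check is the main point to verify. The power of $\|u^r\|_\infty$ is $1-\frac{2}{d-2}=\frac{d-4}{d-2}$. The power of $P$ is $1-\frac{1}{d-2}=\frac{d-3}{d-2}$. For $d=4$ the $R$-dependence of the second term disappears and one simply lets $R\to\infty$; the first term vanishes, consistent with the exponent $0$ on $\|u^r\|_\infty$.

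Two technical points need care. The first is justifying the energy identity and the measure $r^ns^m\,drds\simeq dx$ in the reduced variables; this is a routine computation. The second is the degenerate case where $\|u^r\|_\infty=0$ or $P=0$, which is handled by letting $R\to0$ or treating it trivially. The $s$-moment is handled identically, with $r\leftrightarrow s$, $n\leftrightarrow m$, and \eqref{eq:sder} in place of \eqref{eq:rder}.
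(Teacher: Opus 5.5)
Your argument is correct and gives exactly the paper's bound, with the same dependence $\|w_0/(r^ns^m)\|_{L^\infty}^{1/(d-2)}\|\bfv_0\|_{L^2}^{2/(d-2)}$, but by a different mechanism.

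The paper never splits the domain. It interpolates pointwise, using $|u^r|\le\|u^r\|_{L^\infty}^{(d-4)/(d-2)}|u^r|^{2/(d-2)}$ and $|w|^{1/(d-2)}\le M^{1/(d-2)}(r^ns^m)^{1/(d-2)}$, to write
\[
r^{d-3}|u^r||w|\le M^{\frac{1}{d-2}}\|u^r\|_{L^\infty}^{\frac{d-4}{d-2}}\bigl(r^ns^m|u^r|^2\bigr)^{\frac{1}{d-2}}\bigl(r^{d-2}|w|\bigr)^{\frac{d-3}{d-2}}.
\]
It then applies H\"older with exponents $d-2$ and $\frac{d-2}{d-3}$, together with energy conservation.

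Your steps amount to an optimized-splitting version of the same interpolation: Cauchy--Schwarz against the energy on $\{r\le R\}$, the bound $r^{d-3}\le r^{d-2}/R$ on $\{r>R\}$, then optimization in $R$. Your exponent bookkeeping checks out.

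The two routes trade off as follows.
\begin{itemize}
\item The paper's route is a single inequality with no free parameter. It needs no separate limit $R\to\infty$ for $d=4$, since there H\"older simply becomes Cauchy--Schwarz. It also needs no discussion of degenerate cases.
\item Your route makes the mechanism visible: the energy controls the moment near the axis $\{r=0\}$, and the $L^\infty$ velocity bound controls it far away. It would be the more flexible template if one had different information in the two regions.
\end{itemize}

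One wording point: your opening sentence says you split at a threshold in the size of the velocity. What you actually do, correctly, is split in the radial variable $r$.
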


	\begin{proof}[Proof of Lemma \ref{lem:ddtPd-2}] As noted above, we can apply Lemma \ref{lem:rsder} with $k=d-2$ on the maximal existence interval $[0,T_d^*)$. In particular, from \eqref{eq:rder}, we have
		\begin{equation}
			\begin{aligned}
				\frac{d}{dt}P_{d-2}^{r}(t)&\leq (d-2)\iint_{\Pi}r^{d-3}|u^{r}(t,r,s)||w(t,r,s)| \,drds\\
				&\le (d-2) \left\| \frac{w(t)}{r^{n}s^{m}}\right\|_{L^{\ift}}^{\frac{1}{d-2}} \left\| u^{r}(t) \right\|_{L^{\ift}}^{\frac{d-4}{d-2}} \iint_{\Pi}r^{d-3}r^{\frac{n}{d-2}}s^{\frac{m}{d-2}}|u^{r}(t,r,s)|^{\frac{2}{d-2}}|w(t,r,s)|^{\frac{d-3}{d-2}} \,drds\\
				&=(d-2) \left\| \frac{w_0}{r^{n}s^{m}}\right\|_{L^{\ift}}^{\frac{1}{d-2}}\nrm{u^{r}(t)}_{L^{\ift}}^{\frac{d-4}{d-2}} \iint_{\Pi} \left(r^{n}s^{m}|u^{r}(t,r,s)|^{2} \right)^{\frac{1}{d-2}} \left(r^{d-2}|w(t,r,s)|\right)^{\frac{d-3}{d-2}}drds.
			\end{aligned}
		\end{equation}
	Here, we recall $d\ge 4$ and the fact that $w/(r^ns^m)$ is transported by the flow. Then, applying H\"{o}lder's inequality gives	
		\begin{equation}
			\begin{aligned}
				\frac{d}{dt}P_{d-2}^{r}(t)&\le (d-2) \left\| \frac{w_0}{r^{n}s^{m}}\right\|_{L^{\ift}}^{\frac{1}{d-2}}\nrm{u^{r}(t)}_{L^{\ift}}^{\frac{d-4}{d-2}} \! \left(\iint_{\Pi}r^{n}s^{m}|u^{r}(t,r,s)|^{2}drds\right)^{\frac{1}{d-2}} \! \! \left( \iint_{\Pi} r^{d-2}|w(t,r,s)|drds\right)^{\frac{d-3}{d-2}}\\
				&\le (d-2) \left\| \frac{w_{0}}{r^{n}s^{m}}\right\|_{L^{\ift}}^{\frac{1}{d-2}}\left\| u^{r}(t) \right\|_{L^{\ift}}^{\frac{d-4}{d-2}}\left\| r^{n/2}s^{m/2}\bfv(t)\right\|_{L^{2}(\Pi)}^{\frac{2}{d-2}}P_{d-2}^{r}(t)^{\frac{d-3}{d-2}}\\
				&\simeq_d \left\| \frac{w_{0}}{r^{n}s^{m}}\right\|_{L^{\ift}(\bbR^{d})}^{\frac{1}{d-2}}\left\|u^{r}(t)\right\|_{L^{\ift}(\bbR^{d})}^{\frac{d-4}{d-2}}\left\| \bfv_{0} \right\|_{L^{2}(\bbR^{d})}^{\frac{2}{d-2}}P_{d-2}^{r}(t)^{\frac{d-3}{d-2}},
			\end{aligned}
		\end{equation}
	where, in the last line, we also used the conservation of the kinetic energy.	
		
	The estimate for $P_{d-2}^{s}(t)$ follows analogously. Indeed, using \eqref{eq:sder}, we first obtain
		\begin{equation}
			\begin{aligned}
				\frac{d}{dt}P_{d-2}^{s}(t)&\leq (d-2)\iint_{\Pi}s^{d-3}|u^{s}(t,r,s)||w(t,r,s)|drds\\
				&\le (d-2) \left\| \frac{w(t)}{r^{n}s^{m}}\right\|_{L^{\ift}}^{\frac{1}{d-2}} \left\| u^{s}(t) \right\|_{L^{\ift}}^{\frac{d-4}{d-2}} \iint_{\Pi}s^{d-3}r^{\frac{n}{d-2}}s^{\frac{m}{d-2}}|u^{s}(t,r,s)|^{\frac{2}{d-2}}|w(t,r,s)|^{\frac{d-3}{d-2}} \,drds\\
				&=(d-2) \left\| \frac{w_0}{r^{n}s^{m}}\right\|_{L^{\ift}}^{\frac{1}{d-2}}\nrm{u^{s}(t)}_{L^{\ift}}^{\frac{d-4}{d-2}} \iint_{\Pi} \left(r^{n}s^{m}|u^{s}(t,r,s)|^{2} \right)^{\frac{1}{d-2}} \left(s^{d-2}|w(t,r,s)|\right)^{\frac{d-3}{d-2}}drds. 
			\end{aligned}
		\end{equation}
	Consequently, applying H\"{o}lder's inequality along with the conservation of the kinetic energy, we conclude
		\begin{equation}
			\begin{aligned}
				\frac{d}{dt}P_{d-2}^{s}(t)&\le (d-2) \left\| \frac{w_0}{r^{n}s^{m}}\right\|_{L^{\ift}}^{\frac{1}{d-2}}\nrm{u^{s}(t)}_{L^{\ift}}^{\frac{d-4}{d-2}} \! \left(\iint_{\Pi}r^{n}s^{m}|u^{s}(t,r,s)|^{2}drds\right)^{\frac{1}{d-2}} \! \! \left( \iint_{\Pi} s^{d-2}|w(t,r,s)|drds\right)^{\frac{d-3}{d-2}}\\
				&= (d-2) \left\| \frac{w_{0}}{r^{n}s^{m}}\right\|_{L^{\ift}}^{\frac{1}{d-2}}\left\| u^{s}(t) \right\|_{L^{\ift}}^{\frac{d-4}{d-2}}\left\| r^{n/2}s^{m/2}\bfv(t)\right\|_{L^{2}(\Pi)}^{\frac{2}{d-2}}P_{d-2}^{s}(t)^{\frac{d-3}{d-2}}\\
				&\simeq_d \left\| \frac{w_{0}}{r^{n}s^{m}}\right\|_{L^{\ift}(\bbR^{d})}^{\frac{1}{d-2}}\left\|u^{s}(t)\right\|_{L^{\ift}(\bbR^{d})}^{\frac{d-4}{d-2}}\left\| \bfv_{0} \right\|_{L^{2}(\bbR^{d})}^{\frac{2}{d-2}}P_{d-2}^{s}(t)^{\frac{d-3}{d-2}}.
			\end{aligned}
		\end{equation}
	\end{proof}

The above lemma yields the following upper bounds for $L_{d-2}(t)=P_{d-2}^{r}(t)+P_{d-2}^{s}(t)$, and, in turn, for $\|\bfv\|_{L^{\infty}}$ on the maximal existence interval $[0,T^*_d)$.
	\begin{lem}\label{lem:Ld-2time} 
		Under the assumptions of Theorem \ref{thm:GWP}, for all $0\le t < T^{*}_d$, we have
		\begin{equation}\label{eq:Ld-2time}
			L_{d-2}(t)\lesssim_{d,w_{0}} \begin{cases}
				(1+t)^{2},&\quad\text{if}\quad d=4,\\
				(1+t)^{6},&\quad\text{if}\quad d=5,\\
				e^{ct},&\quad\text{if}\quad d=6,
			\end{cases}
		\end{equation}
		and moreover,
		\begin{equation}\label{eq:uttime}
			\nrm{\bfv(t)}_{L^{\ift}(\bbR^{d})}\lesssim_{d,w_{0}} \begin{cases}
				1+t,&\quad\text{if}\quad d=4,\\
				(1+t)^{3},&\quad\text{if}\quad d=5,\\
				e^{ct},&\quad\text{if}\quad d=6.
			\end{cases}
		\end{equation}
	\end{lem}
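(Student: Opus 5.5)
We combine Lemma \ref{lem:ddtPd-2} with the Feng--\v{S}ver\'{a}k-type estimate \eqref{eq:FStype} of Proposition \ref{prop:FStypeu} to get a closed differential inequality for $L_{d-2}$, solve it, and feed the result back into \eqref{eq:FStype}. First, since $w/(r^ns^m)$ is transported, $\|w(t)/(r^ns^m)\|_{L^\infty}=\|w_0/(r^ns^m)\|_{L^\infty}$. Using $P^r_{d-2}\simeq_d\|r^mw/s^m\|_{L^1}$ and $P^s_{d-2}\simeq_d\|s^nw/r^n\|_{L^1}$, \eqref{eq:FStype} becomes
\begin{equation}
\|\bfv(t)\|_{L^\infty(\bbR^d)}\lesssim_{d,w_0} L_{d-2}(t)^{1/2},\qquad 0\le t<T^*_d.
\end{equation}
Adding the two inequalities of Lemma \ref{lem:ddtPd-2} and bounding $\|u^r\|_{L^\infty},\|u^s\|_{L^\infty}\le\|\bfv\|_{L^\infty}$ and $P^r_{d-2},P^s_{d-2}\le L_{d-2}$ (all exponents are nonnegative because $d\ge4$) gives
\begin{equation}
\frac{d}{dt}L_{d-2}\lesssim_{d,w_0} L_{d-2}^{\frac{d-4}{2(d-2)}+\frac{d-3}{d-2}}=L_{d-2}^{\frac{3d-10}{2d-4}}.
\end{equation}
Lemma \ref{lem:rsder} only gives this in integrated form, i.e.\ as an inequality for $L_{d-2}(t_2)-L_{d-2}(t_1)$. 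So we compare $L_{d-2}$ with the solution of the corresponding ODE through its integral form, or equivalently pass to a Bihari-type lemma, rather than differentiating pointwise.

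The exponent is $\gamma:=\frac{3d-10}{2d-4}$, which equals $1/2$, $5/6$ and $1$ for $d=4,5,6$. For $d=4,5$ we have $\gamma<1$, and integrating $y'\le Cy^\gamma$ gives $y(t)^{1-\gamma}\le y(0)^{1-\gamma}+C(1-\gamma)t$. Hence $L_{d-2}(t)\lesssim (1+t)^{1/(1-\gamma)}$, with $1/(1-\gamma)=\frac{2d-4}{6-d}$; this is $2$ for $d=4$ and $6$ for $d=5$. For $d=6$ the inequality is linear, and Gronwall gives $L_{4}(t)\le L_4(0)e^{ct}$. Here the implicit constants depend on $L_{d-2}(0)$, which is finite under \eqref{eq:decaycondi}, and on $\|\bfv_0\|_{L^2}$ and $\|w_0/(r^ns^m)\|_{L^\infty}$. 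This proves \eqref{eq:Ld-2time}. Inserting it into the displayed velocity bound gives $\|\bfv(t)\|_{L^\infty}\lesssim (1+t)^{(d-2)/(6-d)}$, which is $1+t$ for $d=4$ and $(1+t)^3$ for $d=5$, and $e^{ct/2}\le e^{ct}$ for $d=6$. This is \eqref{eq:uttime}.

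I expect the main obstacle to be the rigorous passage from the integrated inequality to the ODE comparison. $L_{d-2}$ is only known to be finite, and it is not obviously absolutely continuous. I would handle this by defining $M(t):=L_{d-2}(0)+C\int_0^t \|\bfv\|_{L^\infty}^{(d-4)/(d-2)}L_{d-2}^{(d-3)/(d-2)}\,d\tau$. Then $L_{d-2}\le M$, $M$ is absolutely continuous, and $M'\le CM^\gamma$ almost everywhere, so the ODE argument applies to $M$. The integrand is locally integrable on $[0,T^*_d)$ by \eqref{eq:pr_fin} and Lemma \ref{lem:lwplem}.
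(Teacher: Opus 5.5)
Your proposal is correct and follows the paper's proof: you sum the two inequalities of Lemma \ref{lem:ddtPd-2}, close them with \eqref{eq:FStype} to get $\frac{d}{dt}L_{d-2}\lesssim L_{d-2}^{(3d-10)/(2d-4)}$, solve this for $d=4,5,6$, and substitute the result back into the velocity bound. Your comparison function $M(t)$ handles the fact that Lemma \ref{lem:rsder} only gives the inequality in integrated form, which is a careful detail the paper leaves implicit when it says ``solving this differential inequality.''
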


	\begin{proof}[Proof of Lemma \ref{lem:Ld-2time}]
		First, for every $0\le t<T_d^*$, summing up the estimates in Lemma \ref{lem:ddtPd-2}, we get 
		\begin{equation}
			\begin{aligned}
				\frac{d}{dt}L_{d-2}(t)&=\frac{d}{dt}\left[P_{d-2}^{r}(t)+P_{d-2}^{s}(t)\right] \lesssim_ {d,w_0} \nrm{\bfv(t)}_{L^{\ift}(\bbR^{d})}^{(d-4)/(d-2)}L_{d-2}(t)^{(d-3)/(d-2)}.
			\end{aligned}
		\end{equation}
		On the other hand, from the estimate \eqref{eq:FStype} in Proposition \ref{prop:FStypeu}, we have
		\begin{equation}\label{eq:utLd-2}
			\nrm{\bfv(t)}_{L^{\ift}(\bbR^{d})}\lesssim_d \left[P_{d-2}^{r}(t)+P_{d-2}^{s}(t)\right]^{1/2}\left\| \frac{w(t)}{r^{n}s^{m}}\right\|_{L^{\ift}(\bbR^{d})}^{1/2}=L_{d-2}(t)^{1/2}\left\| \frac{w_{0}}{r^{n}s^{m}}\right\|_{L^{\ift}(\bbR^{d})}^{1/2}.
		\end{equation}
		Plugging this into the previous estimate gives
		\begin{equation}
			\begin{aligned}
				\frac{d}{dt}L_{d-2}(t) \lesssim_{d,w_0} L_{d-2}(t)^{(3d-10)/(2d-4)}.
			\end{aligned}
		\end{equation}
		Solving this differential inequality for $4\le d \le 6$ yields \eqref{eq:Ld-2time}. 

		To prove the estimate \eqref{eq:uttime}, one simply needs to substitute \eqref{eq:Ld-2time} into the inequality \eqref{eq:utLd-2}.
	\end{proof}
	
We are now ready to prove Theorem \ref{thm:GWP}. Before proceeding, let us briefly explain the continuation argument. By Lemma \ref{lem:Ld-2time}, the estimate \eqref{eq:uttime} for $\|\bfv(t)\|_{L^\infty}$ is already available throughout the maximal existence interval $[0,T_d^*)$. Therefore, it remains to establish the estimate \eqref{eq:wttime} for $\|\omega(t)\|_{L^\infty}$ on $[0,T_d^*)$. Once this is achieved, the BKM criterion ensures that the solution can be continued beyond $T_d^*$, contradicting the maximality of $T_d^*$ unless $T_d^*=\infty$. Consequently, the local solution extends uniquely to a global-in-time solution; the estimates \eqref{eq:Ld-2time}, \eqref{eq:uttime}, and \eqref{eq:wttime} also hold globally in time.

\begin{proof}[Proof of Theorem \ref{thm:GWP}]
		Consider the flow map $\Phi_{t}=(\Phi_{t}^{r},\Phi_{t}^{s})$ as the unique solution of the ODE
		\begin{equation}
			\begin{aligned}
				\frac{d}{dt}\Phi_{t}^{r}(r,s)=u^{r}(t,\Phi_{t}^{r}(r,s),\Phi_{t}^{s}(r,s)),\quad \Phi_{0}^{r}(r,s)=r,\\
				\frac{d}{dt}\Phi_{t}^{s}(r,s)=u^{s}(t,\Phi_{t}^{r}(r,s),\Phi_{t}^{s}(r,s)),\quad \Phi_{0}^{s}(r,s)=s.
			\end{aligned}
		\end{equation}
		Then, we can write 
		\begin{equation}
			\Phi_{t}^{r}(r,s)=r+\int_{0}^{t}u^{r}(\tau,\Phi_{\tau}^{r}(r,s),\Phi_{\tau}^{s}(r,s))d\tau,\quad \Phi_{t}^{s}(r,s)=s+\int_{0}^{t}u^{s}(\tau,\Phi_{\tau}^{r}(r,s),\Phi_{\tau}^{s}(r,s))d\tau.
		\end{equation}
		Since $w/(r^ns^m)$ is conserved along particle trajectories, we have
		\begin{equation}
			\frac{|w(t,\Phi_{t}^{r}(r,s),\Phi_{t}^{s}(r,s))|}{[\Phi_{t}^{r}(r,s)]^{n}[\Phi_{t}^{s}(r,s)]^{m}}=\frac{|w_{0}(r,s)|}{r^{n}s^{m}},
		\end{equation}
		and therefore,
		\begin{equation}\label{eq:wtPhit}
			\begin{aligned}
				&|w(t,\Phi_{t}^{r}(r,s),\Phi_{t}^{s}(r,s))|=\frac{|w_{0}(r,s)|}{r^{n}s^{m}}[\Phi_{t}^{r}(r,s)]^{n}[\Phi_{t}^{s}(r,s)]^{m}\\
				\quad&=\frac{|w_{0}(r,s)|}{r^{n}s^{m}}\left(r+\int_{0}^{t}u^{r}(\tau,\Phi_{\tau}^{r}(r,s),\Phi_{\tau}^{s}(r,s))d\tau\right)^{n}\left(s+\int_{0}^{t}u^{s}(\tau,\Phi_{\tau}^{r}(r,s),\Phi_{\tau}^{s}(r,s))d\tau\right)^{m}\\
				&\leq\frac{|w_{0}(r,s)|}{r^{n}s^{m}}\left(r+\int_{0}^{t}\nrm{u^{r}(\tau)}_{L^{\ift}(\bbR^{d})}d\tau\right)^{n}\left(s+\int_{0}^{t}\nrm{u^{s}(\tau)}_{L^{\ift}(\bbR^{d})}d\tau\right)^{m}.
			\end{aligned}
		\end{equation}
		Then, using the estimate \eqref{eq:uttime} for $\nrm{u}_{L^{\ift}}$ in Lemma \ref{lem:Ld-2time},  for $0\le t<T^*_d$ we obtain
		\begin{equation}
			|w(t,\Phi_{t}^{r}(r,s),\Phi_{t}^{s}(r,s))|\leq\frac{|w_{0}(r,s)|}{r^{n}s^{m}}\left(r+C(d,w_{0})f(t)\right)^{n}\left(s+C(d,w_{0})f(t)\right)^{m},
		\end{equation}
		with
		\begin{equation}
			f(t):=\begin{cases}
				(1+t)^{2},&\quad\text{if}\quad d=4,\\
				(1+t)^{4},&\quad\text{if}\quad d=5,\\
				e^{ct},&\quad\text{if}\quad d=6.
			\end{cases}
		\end{equation}			

From the elementary inequality $(a+b)^k \le 2^{k-1}(a^k+b^k)$ that holds for all nonnegative $a,b$ and integer $k\ge 1$, we get
\begin{align*}
|w(t,\Phi_{t}^{r}(r,s),\Phi_{t}^{s}(r,s))|&\le 2^{d-4} \frac{|w_{0}(r,s)|}{r^{n}s^{m}} \left(r^n +C(d,\omega_0)^n f(t)^n \right) \left(s^m + C(d,\omega_0)^m f(t)^m\right) \\ 
&\lesssim_d |w_0(r,s)| + C(d,\omega_0)^m \frac{|w_{0}(r,s)|}{s^{m}} f(t)^m + C(d,\omega_0)^n \frac{|w_{0}(r,s)|}{r^{n}} f(t)^n \\
&\qquad \qquad \qquad+ C(d,\omega_0)^{m+n} \frac{|w_{0}(r,s)|}{r^{n}s^{m}} f(t)^{m+n}.
\end{align*}
By assumptions, we have $w_0, \, w_0/s^{m},\, w_0/r^{n},\, w_0/(r^{n}s^{m}) \in L^\infty(\mathbb R^d)$, and therefore, 
\begin{align*}
|w(t,\Phi_t^r(r,s),\Phi_t^s(r,s))| &\le C (d,w_0) \left(1+f(t)^m+f(t)^n+f(t)^{m+n} \right).
\end{align*}
Noting that $1\le f(t)$, we arrive at the desired estimate
\begin{align*}
\| w(t) \|_{L^{\infty}(\mathbb{R}^d)} \lesssim_ {d,\omega_0} f(t)^{m+n} = f(t)^{d-2}.
\end{align*}
for all $0\le t<T_d^*$. As it was noted earlier, by the BKM criterion, we conclude $T_d^{*}=\infty$ and the solution becomes global-in-time as well as the above estimate holds for all $0\le t<\infty$.

		This finishes the proof of Theorem \ref{thm:GWP}.
	\end{proof}

\ackn{D. Lim was supported by the National Research Foundation of Korea grant RS-2024-00350427. The authors would like to thank Professor In-Jee Jeong (KIAS) and Professor Yao Yao (NUS) for helpful discussions and comments.}


    
	

	\bibliographystyle{plain}
	\bibliography{biblography-L-260728}

\begin{thebibliography}{10}

\bibitem{AHK}
Hammadi Abidi, Taoufik Hmidi, and Sahbi Keraani.
\newblock On the global well-posedness for the axisymmetric {E}uler equations.
\newblock {\em Math. Ann.}, 347(1):15--41, 2010.

\bibitem{Chemin}
Jean-Yves Chemin.
\newblock Fluides parfaits incompressibles.
\newblock {\em Ast\'{e}risque}, (230):177, 1995.

\bibitem{Child07}
Stephen Childress.
\newblock Models of vorticity growth in {E}uler flows {I}, {A}xisymmetric flow
  without swirl and {I}{I}, {A}lmost 2-{D} dynamics.
\newblock {\em AML reports 05-07 and 06-07, Courant institute of mathematical
  sciences}, 2007.

\bibitem{Child08}
Stephen Childress.
\newblock Growth of anti-parallel vorticity in {E}uler flows.
\newblock {\em Phys. D}, 237(14-17):1921--1925, 2008.

\bibitem{ChilGil1}
Stephen Childress, Andrew~D. Gilbert, and Paul Valiant.
\newblock Eroding dipoles and vorticity growth for {E}uler flows in
  {$\Bbb{R}^3$}: axisymmetric flow without swirl.
\newblock {\em J. Fluid Mech.}, 805:1--30, 2016.

\bibitem{CJL_gwpbi}
Kyudong Choi, In-Jee Jeong, and Deokwoo Lim.
\newblock On global regularity of some bi-rotational {E}uler flows in
  {$\Bbb{R}^4$}.
\newblock {\em J. Differential Equations}, 425:627--660, 2025.

\bibitem{CJLglobal22}
Kyudong Choi, In-Jee Jeong, and Deokwoo Lim.
\newblock Global regularity for some axisymmetric {E}uler flows in {$\Bbb
  R^d$}.
\newblock {\em Proc. Amer. Math. Soc.}, 154(6):2605--2616, 2026.

\bibitem{Danaxi}
Rapha\"{e}l\ Danchin.
\newblock Axisymmetric incompressible flows with bounded vorticity.
\newblock {\em Uspekhi Mat. Nauk}, 62(3(375)):73--94, 2007.

\bibitem{DE}
Theodore~D. Drivas and Tarek~M. Elgindi.
\newblock Singularity formation in the incompressible {E}uler equation in
  finite and infinite time.
\newblock {\em EMS Surv. Math. Sci.}, 10(1):1--100, 2023.

\bibitem{EYao25}
Khakim Egamberganov and Yao Yao.
\newblock Growth estimates for axisymmetric euler equations without swirl.
\newblock {\em preprint, arXiv:2512.13456}.

\bibitem{FeSv}
Hao Feng and Vladim\'{i}r {\v{S}}ver{\'a}k.
\newblock On the {C}auchy problem for axi-symmetric vortex rings.
\newblock {\em Arch. Ration. Mech. Anal.}, 215:89--123, 2015.

\bibitem{GMT2023}
Stephen Gustafson, Evan Miller, and Tai-Peng Tsai.
\newblock Growth rates for anti-parallel vortex tube {E}uler flows in three and
  higher dimensions.
\newblock {\em J. Math. Fluid Mech.}, 28(1):Paper No. 19, 16, 2026.

\bibitem{HouZhang22P1}
Thomas~Y. Hou and Shumao Zhang.
\newblock Potential singularity of the axisymmetric euler equations with
  $c^\alpha$ initial vorticity for a large range of $\alpha$. part i: the
  $3$-dimensional case.
\newblock 2022.

\bibitem{HouZhang22P2}
Thomas~Y. Hou and Shumao Zhang.
\newblock Potential singularity of the axisymmetric euler equations with
  $c^\alpha$ initial vorticity for a large range of $\alpha$. part ii: the
  $n$-dimensional case.
\newblock 2022.

\bibitem{JL25}
In-Jee Jeong and Deokwoo Lim.
\newblock Support growth of vorticity for bi-rotational euler flows in high
  dimensions.
\newblock {\em Rev. Mat. Iberoam., to appear, arXiv:2510.18335}.

\bibitem{KhYa}
Boris Khesin and Cheng Yang.
\newblock Higher-dimensional {E}uler fluids and {H}asimoto transform:
  counterexamples and generalizations.
\newblock {\em Nonlinearity}, 34(3):1525--1542, 2021.

\bibitem{Limglobal23}
Deokwoo Lim.
\newblock Global regularity of some axisymmetric, single-signed vorticity in
  any dimension.
\newblock {\em Commun. Math. Sci.}, 23(1):279--298, 2025.

\bibitem{LJ_optimal}
Deokwoo Lim and In-Jee Jeong.
\newblock On the optimal rate of vortex stretching for axisymmetric {E}uler
  flows without swirl.
\newblock {\em Arch. Ration. Mech. Anal.}, 249(3):Paper No. 32, 31, 2025.

\bibitem{Maffei2001}
Carlotta Maffei and Carlo Marchioro.
\newblock A confinement result for axisymmetric fluids.
\newblock {\em Rend. Sem. Mat. Univ. Padova}, 105:125--137, 2001.

\bibitem{Majda2002}
Andrew~J. Majda and Andrea~L. Bertozzi.
\newblock {\em Vorticity and incompressible flow}, volume~27 of {\em Cambridge
  Texts in Applied Mathematics}.
\newblock Cambridge University Press, Cambridge, 2002.

\bibitem{Miller26prep}
Evan Miller.
\newblock Global regularity for axisymmetric, swirl-free solutions of the euler
  equation in four dimensions.
\newblock {\em preprint, arXiv:2602.13963}.

\bibitem{Miller}
Evan Miller and Tai-Peng Tsai.
\newblock On the regularity of axisymmetric, swirl-free solutions of the euler
  equation in four and higher dimensions.
\newblock {\em Rev. Mat. Iberoam., to appear, arXiv:2204.13406}.

\bibitem{Raymond}
X.~Saint~Raymond.
\newblock Remarks on axisymmetric solutions of the incompressible {E}uler
  system.
\newblock {\em Comm. Partial Differential Equations}, 19(1-2):321--334, 1994.

\bibitem{SWZ25}
Feng Shao, Dongyi Wei, and Zhifei Zhang.
\newblock Global regularity of axisymmetric {E}uler equations without swirl in
  higher dimensions.
\newblock {\em Acta Math. Sin. (Engl. Ser.)}, 42(3):663--679, 2026.

\bibitem{SY}
Taira Shirota and Taku Yanagisawa.
\newblock Note on global existence for axially symmetric solutions of the
  {E}uler system.
\newblock {\em Proc. Japan Acad. Ser. A Math. Sci.}, 70(10):299--304, 1994.

\bibitem{UY1968}
M.~R. Ukhovskii and V.~I. Yudovich.
\newblock Axially symmetric flows of ideal and viscous fluids filling the whole
  space.
\newblock {\em J. Appl. Math. Mech.}, 32:52--61, 1968.

\bibitem{Yang}
Cheng Yang.
\newblock Vortex motion of the {E}uler and lake equations.
\newblock {\em J. Nonlinear Sci.}, 31(3):Paper No. 48, 21, 2021.

\end{thebibliography}
	
\end{document}